\theoremstyle{plain}
\newtheorem{theorem}{Theorem}
\newtheorem{conjecture}[theorem]{Conjecture}
\newtheorem{definition}[theorem]{Definition}
\newtheorem{lemma}[theorem]{Lemma}
\newtheorem{observation}[theorem]{Observation}
\theoremstyle{definition}
\newtheorem*{claim*}{Claim}
\newcommand{\D}{\mathcal{D}}
\newcommand{\F}{\mathcal{F}}
\newcommand{\fdn}{FD}
\title{Graphs with minimum fractional domatic number}
\author{
	Maximilien Gadouleau
	\thanks{Department of Computer Science, Durham University, UK. Email: \texttt{m.r.gadouleau@durham.ac.uk}}  
	\and Nathaniel Harms \thanks{EPFL, Switzerland. Email: \texttt{nathaniel.harms@epfl.ch}. This work was done while the author was visiting the University of Liverpool. Partly funded by an NSERC MSFSS award.}
	\and George B. Mertzios
	\thanks{Department of Computer Science, Durham University, UK. Email: \texttt{george.mertzios@durham.ac.uk}} 
	\and Viktor Zamaraev
	\thanks{Department of Computer Science, University of Liverpool, UK. Email: \texttt{viktor.zamaraev@liverpool.ac.uk}}
}
\begin{document}

\maketitle

\begin{abstract}
	The domatic number of a graph is the maximum number of vertex disjoint dominating sets that partition the vertex set of the graph. In this paper we consider the fractional
	variant of this notion.
	Graphs with fractional domatic number 1 are exactly the graphs that contain an isolated vertex. 
	Furthermore, it is known that all other graphs have fractional domatic number at least 2.
	In this note we characterize graphs with fractional domatic number 2.
	More specifically, we show that a graph without isolated vertices has fractional domatic number
	2 if and only if it has a vertex of degree 1 or a connected component isomorphic to a 4-cycle.
	We conjecture that if the fractional domatic number is more than 2, then it is at least 7/3. 
\end{abstract}

\section{Introduction}

A set of vertices in a graph $G$ is \emph{dominating} if every vertex of the graph either belongs to the set or has a neighbour in the set. 
A \emph{domatic partition} of $G$ is a partition of its vertices into dominating sets.
The maximum number of sets in a domatic partition of $G$ is called the \emph{domatic number} of $G$. This concept was introduced in 1975 by Cockayne and Hedetniemi \cite{CH75}, and since then appeared in many studies. It arises in several applications including facility location in networks \cite{FYK00} and lifetime maximization of sensor networks \cite{MW05}.

In sensor network applications, a typical scenario is that small battery-powered devices are deployed in a remote
area where they need to continuously monitor environmental conditions (e.g. temperature, pressure, etc.) via sensors. The energy limitations of the devices and the remoteness of the network demand efficient power management. Dominating set based scheduling turned out to be a useful concept in this context.
The redundancy graph consists of the vertices corresponding to the devices where two devices are connected
by an edge if they can monitor the same area, i.e. one of them can be asleep when the other is active and vice  versa. Thus, in order to monitor the entire area at any given moment it is sufficient that only nodes of
a dominating set of the redundancy graph are active while the other nodes might be in a power saving mode.
As an example let us consider a network that consists of five devices $A,B,C,D,E$ each having one-month long battery and the redundancy graph is a 5-cycle $(A,B,C,D,E,A)$. If no sleeping schedule is applied and all devices are always active, such a network could serve at most one month. A more efficient approach is to partition the vertices into two dominating sets, say $\{A,B,D\}$ and $\{C,E\}$, and let only the devices in the first set to monitor the area for one month and then only the devices in the second set to do the job in the second month. 
Such a scheduling mechanism doubles the lifetime of the network. 
The domatic number of the redundancy graph is the maximum number of dominating sets that can successively monitor the network.

It turns out that one can achieve a longer lifetime by scheduling not necessarily disjoint dominating sets.
In our example, we can attain a network lifetime of 2.5 months by activating
devices in the following five dominating sets $\{A,C\}, \{B,D\}, \{C,E\}, \{A,D\}, \{B,E\}$ 
in turn for half a month each. The limits of such schedules are characterized by the \emph{fractional} domatic number.
This notion was formally introduced in 2006 by Suomela \cite{Suo06} in the context of lifetime maximization of sensor networks, although the concept was studied  in 2000 by Fujita, Yamashita, and Kameda \cite{FYK00}. The fractional domatic number of a graph $G$ can be defined as follows. For a natural number $s$, let $f(G,s)$ be the maximum number of not necessarily distinct dominating sets such that every vertex is contained in at most $s$ of them. Then the fractional domatic number of $G$, denoted by $\fdn(G)$, is the supremum of $f(G,s)/s$ over all natural numbers $s$. Since the fractional domatic number can alternatively be defined as the solution to the linear programming relaxation of the integer linear program defining domatic number, the supremum is always attained, and hence it can be replaced with `maximum' in the previous sentence (see \cite{SU11} for more detail).

Clearly, if $G$ has an isolated vertex, then it belongs to every dominating set of $G$, and therefore $\fdn(G) = 1$. On the other hand, if $G$ has no isolated vertices,
any maximal independent set and its complement are vertex disjoint dominating
sets of $G$, and hence $\fdn(G)$ is at least 2.

Motivated by sensor network applications Abbas, Egerstedt, Liu, Thomas, and Whalen \cite{WMCRP16} studied fractional domatic number
of $K_{1,6}$-free graphs. The choice of the graph class was motivated by the fact that these graphs include all unit disk graphs, which are often used to model communication graphs of wireless networks. The authors showed that, except eight small graphs, any $K_{1,6}$-free graph with minimum degree at least 2 has fractional domatic number at least 5/2.

In this note we characterize graphs with fractional domatic number 2. More specifically, we prove the following

\begin{theorem}[Main]\label{th:main-intro}
	A graph without isolated vertices has fractional domatic number 2 if and only if it contains a vertex of degree one or a connected component isomorphic to a 4-cycle.
\end{theorem}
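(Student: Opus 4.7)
\textbf{If direction.} Combined with the known lower bound $\text{FD}(G) \ge 2$, it suffices to give a dual-LP certificate of value $2$. If $v$ has degree $1$ with neighbour $u$, every dominating set contains $u$ or $v$, so $y_u = y_v = 1$ (and $0$ elsewhere) is dual-feasible of value $2$. If $C$ is a $C_4$-component, then every dominating set of $G$ contains at least $\gamma(C_4) = 2$ vertices of $V(C)$ (because $V(C)$ is separated from the rest of $G$ and $\gamma(C_4) = 2$), so $y_v = \tfrac{1}{2}$ on $V(C)$ (and $0$ elsewhere) is dual-feasible of value $2$.

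\textbf{Only-if direction.} Suppose $G$ has no isolated or degree-$1$ vertex and no $C_4$-component. The fractional domatic number of a graph equals the minimum of those of its connected components (easily seen by taking the product of LP solutions and rescaling), so we may assume $G$ is connected, so $\delta(G) \ge 2$ and $G \ne C_4$; we must show $\text{FD}(G) > 2$. Suppose for contradiction $\text{FD}(G) = 2$ and let $y$ be an optimal dual solution, so $\sum_v y_v = 2$ and $y(D) \ge 1$ for every dominating $D$. The central observation is the \emph{pair constraint}
\[ y_u + y_v \le 1 \qquad \text{for all distinct } u, v \in V, \]
which follows because $\delta(G) \ge 2$ ensures each of $u, v$ has a neighbour in $V \setminus \{u, v\}$, making $V \setminus \{u, v\}$ dominating. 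The small cases $|V| \le 4$ are immediate: the only connected graphs with $\delta \ge 2$ are $K_3$, $C_4$, $K_4 - e$, and $K_4$. For $K_3$ every singleton is dominating, forcing $y_v = 1$ for each vertex and so $\sum y = 3 \ne 2$; for $K_4$ and $K_4 - e$ a universal vertex $v$ gives $y_v = 1$ and then $y_w = 0$ for $w \ne v$ by the pair constraint, so $\sum y < 2$.

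For $|V| \ge 5$, I will invoke the Payan--Xuong theorem, which characterises connected graphs of minimum degree $\ge 1$ with $\gamma(G) = |V(G)|/2$ as $C_4$ and coronas $H' \circ K_1$; both are excluded by our hypotheses (coronas have pendant vertices), so $\gamma(G) < n/2$ and there is a dominating set $D^*$ with $|D^*| < n/2$. Combining $y(D^*), y(V \setminus D^*) \ge 1$ with $\sum y = 2$ forces $y(D^*) = y(V \setminus D^*) = 1$, and the pair constraint then tightly restricts the distribution of $y$. The \textbf{main obstacle} is turning this tight configuration into an actual contradiction, which I expect to require splitting further on $\gamma(G)$. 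If $\gamma(G) = 2$, each dominating pair $\{u, v\}$ gives an equality $y_u + y_v = 1$ (combining the pair constraint with $y(\{u,v\}) \ge 1$); the linear-algebraic structure of the ``dominating-pair graph'' on $V$ should then conflict with $\sum y = 2$---as in the illustrative case $K_{2,3}$, where a triangle of dominating pairs forces $y \equiv \tfrac{1}{2}$ and so $\sum y = \tfrac{5}{2} \ne 2$. If $\gamma(G) \ge 3$, one instead exploits the equalities $y(I) = 1$ for every maximal independent set $I$ (since both $I$ and $V \setminus I$ are dominating), combined with the pair constraint and the lower bound $|D| \ge 3$ on dominating sets, to force an infeasible configuration of $y$.
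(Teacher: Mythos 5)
Your ``if'' direction is correct and self-contained: the dual certificates (weight $1$ on both endpoints of a pendant edge; weight $\tfrac12$ on the vertices of a $C_4$-component) are valid, since every dominating set must contain $u$ or $v$ in the first case and at least $\gamma(C_4)=2$ vertices of the component in the second, and together with the standard lower bound $\fdn(G)\ge 2$ this settles that direction. The reduction to connected components and the small cases $|V|\le 4$ are also fine, as is the pair constraint $y_u+y_v\le 1$ (valid because $\delta(G)\ge 2$ makes $V\setminus\{u,v\}$ dominating).

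The ``only if'' direction, however, has a genuine gap: it is a plan rather than a proof, and it stops exactly where the real work begins. After invoking Payan--Xuong to get a dominating set $D^*$ with $|D^*|<n/2$ and deducing $y(D^*)=y(V\setminus D^*)=1$, you explicitly defer the contradiction to a further case split on $\gamma(G)$ and describe what ``should'' or is ``expected to'' happen; neither branch is carried out. For $\gamma(G)=2$ the ``dominating-pair graph'' need not contain a triangle or any rich structure (it can be a single edge), so the $K_{2,3}$ computation does not generalize. For $\gamma(G)\ge 3$, the constraints you extract (the pair constraints and the equalities $y(I)=1$ for maximal independent sets $I$) form only a small subset of the full dual system, and there is no a priori reason that this subset together with $\sum_v y_v=2$ is infeasible; establishing infeasibility requires identifying, graph by graph, enough dominating sets to pin down $y$, which is a structural problem of essentially the same difficulty as the theorem itself. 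This missing content is precisely what the paper supplies, by a primal route: a connected graph with $\delta\ge 2$ is either $2$-connected or a dumbbell (\cref{lem:deg2decomposition}); $2$-connected graphs are handled by ear decompositions starting from a non-$C_4$ cycle or from $K_{2,p}$ (\cref{th:2con}), with $(2r+1,r)$-configurations extended across ears and handles (\cref{lem:earExtension,lem:confExt}); and the dumbbell cases are settled by explicit constructions (\cref{lem:C4C4dumbbell,lem:C4Hdumbbell,lem:H1H2dumbbell}). Without an analogue of that machinery, or a completed dual argument, your proof of the hard direction is incomplete.
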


It follows that the fractional domatic number of a graph is strictly greater than 2 if and only if the minimum degree of the graph is at least two and every connected component is different from a 4-cycle.

We prove the main result in \cref{sec:main} and conclude the paper in \cref{sec:conclusion}. In the next section we introduce necessary notions and auxiliary results.

\section{Preliminaries}\label{sec:prelim}

We consider simple graphs, i.e. undirected graphs, with no loops or multiple edges.
We denote by $V(G)$ and $E(G)$ the sets of vertices and edges of a graph $G$, respectively.
For a vertex $v$ of a graph $G=(V,E)$ we denote by $N(v)$ the \emph{neighbourhood} of $v$,
i.e. the set of vertices adjacent to $v$, and by $\delta(G)$ the minimum degree of a vertex in $G$.
As usual, $C_k$ and $K_{p,q}$ denote a $k$-vertex cycle and respectively a complete bipartite graph with $p$ and $q$ vertices in the parts.
A \emph{cut vertex} is a vertex in a connected graph that 
disconnects the graph upon deletion.
Similarly, a \emph{cut edge} is an edge in a connected graph 
that disconnects the graph upon deletion.
A graph is \emph{2-connected} if it is connected and contains 
at least 3 vertices, but no cut vertex.
A set $D \subseteq V$ is a \emph{dominating set} of $G$ if every vertex in $V \setminus D$
has a neighbour in $D$. 

\begin{observation}[Ore \cite{Ore62}]
	\label{obs:domComp}
	If $G$ is a graph without isolated vertices then the complement of a 
	minimal dominating set of $G$ is also a dominating set of $G$.
\end{observation}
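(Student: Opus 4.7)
The plan is to show that $V \setminus D$ dominates every vertex in $D$ (it trivially contains all vertices of $V \setminus D$), by exploiting the minimality of $D$ together with the absence of isolated vertices.

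Fix an arbitrary vertex $v \in D$. I want to produce a neighbour of $v$ lying in $V \setminus D$. Since $D$ is a \emph{minimal} dominating set, the set $D \setminus \{v\}$ fails to dominate $G$, so there is some vertex $u \in V$ that is neither in $D \setminus \{v\}$ nor adjacent to any vertex of $D \setminus \{v\}$. I would then split into two cases depending on whether $u = v$ or $u \neq v$.

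In the first case ($u = v$), the vertex $v$ has no neighbour in $D \setminus \{v\}$, hence no neighbour in $D$ at all; since $G$ has no isolated vertex, $v$ must have a neighbour, which is forced to lie in $V \setminus D$. In the second case ($u \neq v$), the condition $u \notin D \setminus \{v\}$ forces $u \notin D$, i.e. $u \in V \setminus D$; since $D$ does dominate $G$, $u$ has some neighbour in $D$, but this neighbour cannot lie in $D \setminus \{v\}$, so it must be $v$ itself. Either way, $v$ has a neighbour in $V \setminus D$, as required.

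There is no real obstacle here: the argument is a short case analysis that uses minimality in exactly the standard way. The only point worth being careful about is the role of the no-isolated-vertex hypothesis, which is genuinely needed only in the first case to guarantee that $v$ has any neighbour at all.
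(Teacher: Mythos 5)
Your argument is correct and complete: the case split on whether the witness $u$ for the non-domination of $D\setminus\{v\}$ equals $v$ is exactly the classical proof of Ore's theorem, and you correctly isolate the one place where the no-isolated-vertex hypothesis is needed. The paper itself gives no proof of this observation --- it simply cites Ore --- so there is nothing to compare against; your write-up is the standard argument and could serve as the omitted proof verbatim.
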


We say that a multiset $\D = \{ D_1, D_2, \ldots, D_k \}$ of $k \geq 1$ dominating sets 
in $G$ is a \emph{$(k,s)$-configuration} of $G$, if every vertex of $G$ belongs to (we shall also say is covered by) at most $s$
dominating sets in $\D$. The \emph{fractional domatic number} $\fdn(G)$ of $G$ is the maximum of $k/s$ over
all natural numbers $k$ and $s$ such that $G$ admits a $(k,s)$-configuration.

\begin{observation}\label{obs:lowerFDN}
	If a graph $G$ admits a $(k,s)$-configuration, then $\fdn(G) \geq \frac{k}{s}$.
\end{observation}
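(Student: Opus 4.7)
The plan is that this observation is essentially an immediate consequence of the definition of the fractional domatic number given in the introduction, so the ``proof'' amounts to unwinding that definition. Recall that $\fdn(G)$ was defined as the maximum of the ratio $k/s$ taken over all pairs of natural numbers $(k,s)$ such that $G$ admits a $(k,s)$-configuration (with the earlier remark that the supremum is attained, justifying ``maximum''). Given any particular $(k,s)$-configuration of $G$, the pair $(k,s)$ is therefore among the pairs over which this maximum is computed, so $k/s$ is one of the values in that set, and hence is bounded above by the maximum, which is $\fdn(G)$.

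So my proof would simply be one sentence: by definition, $\fdn(G)$ is the maximum of $k'/s'$ over all $(k',s')$ for which $G$ admits a $(k',s')$-configuration, so the existence of a $(k,s)$-configuration of $G$ implies $\fdn(G) \geq k/s$.

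There is no real obstacle here; the observation is recorded separately only so it can be invoked cleanly in subsequent lower-bound arguments (likely via explicit constructions of $(k,s)$-configurations with small $s$) without having to rewrite the definitional argument each time. If one wanted to be slightly more self-contained, one could instead bypass the ``maximum is attained'' remark and argue directly from the supremum formulation that any single achievable ratio $k/s$ lower-bounds the supremum; the conclusion is the same.
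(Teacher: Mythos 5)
Your proof is correct and matches the paper's treatment: the paper states this observation without proof precisely because it is immediate from the definition of $\fdn(G)$ as the maximum of $k/s$ over all pairs admitting a $(k,s)$-configuration, which is exactly the one-line argument you give.
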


We will make use of the following known facts about the fractional domatic number 

\begin{lemma}[\cite{GH21}]
	\label{lem:FDCycles}
	For any natural $n$,
	\[
	\fdn(C_n) = \left\{ 
	\begin{array}{cr}
		3, & ~~~\text{if}~~~n \equiv 0 {\pmod {3}} \\
		\frac{3n}{n+2}, & ~~~\text{if}~~~n \equiv 1 {\pmod {3}}\\
		\frac{3n}{n+1}, & ~~~\text{if}~~~n \equiv 2 {\pmod {3}}
	\end{array}
	\right.
	\]
\end{lemma}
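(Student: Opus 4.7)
The plan is to prove matching upper and lower bounds on $\fdn(C_n)$; combining the two gives the formula.

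For the upper bound, I would first observe that every dominating set $D$ in $C_n$ satisfies $|D|\ge \lceil n/3\rceil$, since a single vertex dominates at most three vertices (itself and its two neighbours). Then a double-counting argument applied to an arbitrary $(k,s)$-configuration $\{D_1,\ldots,D_k\}$ yields
\[
k\cdot \lceil n/3\rceil \;\le\; \sum_{i=1}^{k}|D_i| \;=\; \sum_{v\in V(C_n)} |\{i:v\in D_i\}| \;\le\; n\cdot s,
\]
so $k/s \le n/\lceil n/3\rceil$. Evaluating $\lceil n/3\rceil$ in each of the three residue classes of $n$ modulo $3$ produces exactly the three upper bounds in the statement.

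For the lower bound, I would exhibit explicit configurations matching the upper bound in each case. Label the vertices $v_0,\ldots,v_{n-1}$ cyclically. When $n\equiv 0\pmod 3$, the three pairwise disjoint sets $\{v_i : i\equiv j\pmod 3\}$ for $j\in\{0,1,2\}$ form a $(3,1)$-configuration, so by \cref{obs:lowerFDN} $\fdn(C_n)\ge 3$. In the remaining cases set $d=\lceil n/3\rceil$ and fix a minimum dominating set $D$ of size $d$ on $C_n$; an explicit choice is $\{v_0,v_3,\ldots,v_{3(d-1)}\}$ when $n\equiv 2\pmod 3$ and $\{v_0,v_3,\ldots,v_{3(d-2)},v_{3(d-1)-1}\}$ when $n\equiv 1\pmod 3$, where in the latter case the last gap around the cycle has length $2$ instead of $3$. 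Now form the multiset $\D=\{D+i \bmod n : i=0,1,\ldots,n-1\}$ of $n$ cyclic shifts of $D$. Each shift is a dominating set (being an automorphic image of $D$), and each vertex $v$ belongs to exactly $|D|=d$ of the shifts (one for each $j\in D$, via $i=v-j\bmod n$). Hence $\D$ is an $(n,d)$-configuration, and \cref{obs:lowerFDN} gives $\fdn(C_n)\ge n/d = n/\lceil n/3\rceil$, meeting the upper bound in each residue class.

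The real obstacle is simply spotting the "all cyclic shifts of a minimum dominating set" idea on the lower-bound side; this is a standard fractional trick exploiting the vertex-transitive automorphism group of $C_n$, which promotes one integral certificate into a tight fractional one. Once this is in place, the whole proof reduces to a case split on $n\bmod 3$: an elementary counting inequality on one side, a concrete dominating set of size $\lceil n/3\rceil$ and its $n$ shifts on the other, and the three fractions in the statement pop out by direct arithmetic.
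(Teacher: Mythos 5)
Your proof is correct. Note, however, that the paper does not prove this lemma at all: it is imported verbatim from \cite{GH21}, so there is no internal argument to compare against. What you supply is a clean, self-contained derivation: the upper bound $\fdn(C_n)\le n/\lceil n/3\rceil$ via the domination number of the cycle and double counting, and the matching lower bound via the $n$ cyclic shifts of a minimum dominating set, which form an $(n,\lceil n/3\rceil)$-configuration because $C_n$ is vertex-transitive. This is indeed the standard argument (it shows more generally that for vertex-transitive graphs the fractional domatic number equals $n$ divided by the domination number), and the arithmetic in the three residue classes checks out. One cosmetic slip: in the case $n\equiv 1\pmod 3$ your explicit set $\{v_0,v_3,\ldots,v_{3(d-2)},v_{3(d-1)-1}\}$ actually leaves \emph{two} gaps of length $2$ (between $v_{n-4}$ and $v_{n-2}$, and between $v_{n-2}$ and $v_0$), not one gap of length $2$ as you describe; the set is nonetheless a dominating set of size $\lceil n/3\rceil$, so nothing breaks.
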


\begin{lemma}[\cite{GH21}]
	\label{lem:FDDisjUnion}
	Let $G$ and $H$ be two graphs on disjoint vertex sets.
	Then $\fdn(G \cup H) = \min\{ \fdn(G), \fdn(H) \}$.
\end{lemma}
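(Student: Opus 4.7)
The plan is to establish the two inequalities separately. For the upper bound $\fdn(G \cup H) \leq \min\{\fdn(G), \fdn(H)\}$, I would use that $V(G)$ and $V(H)$ are disjoint and no edges run between them in $G \cup H$, so any dominating set $D$ of $G \cup H$ must satisfy that $D \cap V(G)$ dominates $G$ and $D \cap V(H)$ dominates $H$. Consequently, any $(k,s)$-configuration $\D$ of $G \cup H$ restricts, component by component, to $(k,s)$-configurations of both $G$ and $H$ with the same parameters $(k,s)$, so $\fdn(G) \geq k/s$ and $\fdn(H) \geq k/s$ by \cref{obs:lowerFDN}.

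For the matching lower bound, I would assume without loss of generality $\fdn(G) \leq \fdn(H)$, and fix optimal configurations $\D_G = \{D^G_1, \ldots, D^G_{k_G}\}$ and $\D_H = \{D^H_1, \ldots, D^H_{k_H}\}$ realizing $\fdn(G) = k_G/s_G$ and $\fdn(H) = k_H/s_H$. To align the numbers of sets, I would inflate each configuration by duplication, taking $k_H$ copies of $\D_G$ (producing a $(k_G k_H, k_H s_G)$-configuration of $G$) and $k_G$ copies of $\D_H$ (producing a $(k_G k_H, k_G s_H)$-configuration of $H$). Pair them off arbitrarily and form the unions $D_i := D^G_i \cup D^H_i$ for $i = 1, \ldots, k_G k_H$, where the indexing ranges over the inflated multisets. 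Each $D_i$ dominates $G \cup H$; moreover, since $\fdn(G) \leq \fdn(H)$ is equivalent to $k_G s_H \leq k_H s_G$, every vertex of $G \cup H$ lies in at most $k_H s_G$ of the $D_i$. This yields a $(k_G k_H, k_H s_G)$-configuration of $G \cup H$ with ratio $k_G/s_G = \min\{\fdn(G), \fdn(H)\}$, as required by \cref{obs:lowerFDN}.

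Nothing in the proof is deep. The one step that requires mild care is the synchronization move, which relies on the routine observation that duplicating every set in a $(k,s)$-configuration $m$ times yields an $(mk, ms)$-configuration with the same ratio; this lets us assume the configurations of $G$ and of $H$ have the same number of sets before pairing. After that, verifying that the unions are dominating sets of $G \cup H$ and counting coverage is pure bookkeeping, and the absence of edges between $V(G)$ and $V(H)$ is what makes both the restriction step (upper bound) and the union step (lower bound) behave componentwise.
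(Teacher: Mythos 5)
Your proof is correct. Note that the paper does not actually prove this lemma---it is imported from \cite{GH21} as a known fact---so there is no in-paper argument to compare against; what you have written is a valid self-contained proof. Both directions check out: the restriction step works because every neighbour of a vertex of $G$ lies in $V(G)$, so each dominating set of $G\cup H$ intersects $V(G)$ in a dominating set of $G$ (and likewise for $H$) without increasing any vertex's coverage count; and the synchronization step correctly uses that duplicating a $(k,s)$-configuration $m$ times yields an $(mk,ms)$-configuration, after which the inequality $k_G s_H \leq k_H s_G$ (equivalent to $\fdn(G)\leq\fdn(H)$) guarantees the pairwise unions form a $(k_G k_H, k_H s_G)$-configuration of ratio exactly $\min\{\fdn(G),\fdn(H)\}$. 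The only implicit assumption is that optimal configurations exist, which the paper justifies in the introduction by noting that the defining supremum is always attained.
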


A \emph{path} in a graph $G=(V,E)$ is a sequence $(v_1, v_2, \ldots, v_k)$
of pairwise distinct vertices, where $v_{i}v_{i+1} \in E$ for every $i = 1, 2, \ldots,k-1$.
The first and the last vertices of a path are called the \emph{end-vertices} of the path, and
all other vertices are the \emph{internal} vertices of the path. A path in $G$ is called \emph{binary}
if all its internal vertices have degree 2 in $G$.
A \emph{cycle} in $G$ is a sequence $(v_1, v_2, \ldots, v_k, v_1)$,
where $(v_1, v_2, \ldots, v_k)$ is a path and $v_kv_1 \in E$.
Given a path or a cycle $H$ in $G$ we denote by $E(H)$ the set of edges of $H$.

Let $H_1$ and $H_2$ be two graphs that have at most one vertex in common. A graph $G$ is called a \emph{$(H_1,H_2)$-dumbbell} if $G$ is obtained from $H_1$ and $H_2$ by connecting
them with a path. More formally, $G$ is the union of $H_1$, $H_2$, and $P$, where
\begin{enumerate}
	\item $P$ is a binary path in $G$ with $|V(P)| \geq 1$;
	\item for each $i \in \{1,2\}$, the set $V(H_i) \cap V(P)$ has exactly one element, which is an end-vertex of $P$;
	\item if $|V(P)| \geq 2$, then $H_1$ and $H_2$ are vertex disjoint.
\end{enumerate}
The path $P$ is called the \emph{handle} of the dumbbell and the graphs $H_1$ and $H_2$
are its \emph{plates}. A graph is a \emph{dumbbell} if it is a $(H_1,H_2)$-dumbbell for some 
$H_1$ and $H_2$.


\begin{lemma}
	\label{lem:deg2decomposition}
	Let $G=(V,E)$ be a connected graph with $\delta(G) \geq 2$. Then
	\begin{enumerate}
		\item $G$ is 2-connected; or
		\item $G$ is a $(H_1,H_2)$-dumbbell for some connected graphs $H_1$ and $H_2$ with 
		$\delta(H_1) \geq 2$, $\delta(H_2) \geq 2$.
	\end{enumerate}
\end{lemma}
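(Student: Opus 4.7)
The plan is to exploit the block structure of $G$. Since $G$ is connected with $\delta(G) \geq 2$, we have $|V(G)| \geq 3$, so if $G$ is not 2-connected then it has a cut vertex. Pick an end-block $B_1$ of $G$ (a maximal 2-connected subgraph, or a bridge, of $G$ containing exactly one cut vertex of $G$) and let $v_1$ be that cut vertex. First observe that $B_1$ cannot be a bridge: the non-$v_1$ endpoint of a bridge end-block would have degree $1$ in $G$, contradicting $\delta(G) \geq 2$. Hence $B_1$ is 2-connected with $|V(B_1)| \geq 3$; in particular, every vertex of $V(B_1) \setminus \{v_1\}$ has all its $G$-neighbours in $V(B_1)$, and $v_1$ itself has at least two neighbours in $V(B_1)$.

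I then split into two cases based on the number of $v_1$'s neighbours outside $V(B_1)$. In \textbf{Case A}, $v_1$ has at least two outside neighbours; here I take $P = (v_1)$, $H_1 = B_1$, and $H_2 = G[(V(G) \setminus V(B_1)) \cup \{v_1\}]$. In \textbf{Case B}, $v_1$ has a unique outside neighbour $w$; here I extend the handle greedily, starting with $p_0 = v_1$ and $p_1 = w$, and while $\deg_G(p_k) = 2$ letting $p_{k+1}$ be the neighbour of $p_k$ other than $p_{k-1}$. I stop at the first $p_k$ with $\deg_G(p_k) \geq 3$, and then take $P = (p_0, \ldots, p_k)$, $H_1 = B_1$, and $H_2 = G[V(G) \setminus V(B_1) \setminus \{p_1, \ldots, p_{k-1}\}]$.

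The main obstacle is verifying that the walk in Case B is well-defined and terminates. If $p_{i+1} = p_j$ for some $1 \leq j \leq i - 1$, then $p_j$ would have a third neighbour beyond $p_{j-1}$ and $p_{j+1}$, contradicting $\deg_G(p_j) = 2$; and if $p_{i+1} = v_1$, then $p_i$ would be a second outside neighbour of $v_1$, contradicting the Case B hypothesis. The same reasoning shows $p_i \notin V(B_1)$ for every $i \geq 1$, since only $v_1$ in $V(B_1)$ has neighbours outside $V(B_1)$. Thus the $p_i$ are all distinct, and finiteness of $G$ forces the walk to terminate at some $p_k$ of degree $\geq 3$.

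Once the walk is in hand, all dumbbell axioms follow by construction: $P$ is binary because $p_1, \ldots, p_{k-1}$ have degree $2$ in $G$ by design; $V(H_1) \cap V(P) = \{v_1\}$, $V(H_2) \cap V(P) = \{p_k\}$, and $V(H_1) \cap V(H_2) = \emptyset$; and $G = H_1 \cup H_2 \cup P$ because the only edges leaving $V(B_1)$ are incident with $v_1$ and each internal $p_i$ has only the two path-neighbours. Both $H_1$ and $H_2$ are connected, since any simple $G$-path between two vertices of $V(H_2)$ cannot cross the unique ``gateway'' (the cut vertex $v_1$ in Case A, or the edge $p_{k-1}p_k$ in Case B) twice. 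Finally $\delta(H_2) \geq 2$: in Case A only $v_1$'s neighbourhood changes and retains $\geq 2$ outside neighbours by hypothesis, while in Case B only $p_k$'s neighbourhood changes and retains $\deg_G(p_k) - 1 \geq 2$ neighbours.
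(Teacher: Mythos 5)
Your proof is correct, and it takes a genuinely different route from the paper's. The paper never invokes the block decomposition: it splits instead on whether $G$ has a cut vertex of degree $2$. If so, it extends that vertex to a \emph{maximal binary path} and takes as plates the two components obtained by deleting the path's internal vertices (maximality forces the endpoints to have degree $\geq 3$, hence degree $\geq 2$ in the plates). If every cut vertex has degree $\geq 3$, it picks one such vertex $v$ and uses either the one-vertex handle $(v)$, or, when $v$ has a unique neighbour $u$ in some component of $G-v$, the handle $(u,v)$, observing that $u$ is then also a cut vertex of degree $\geq 3$. Your end-block argument buys a slightly stronger conclusion --- the plate $H_1=B_1$ is guaranteed to be $2$-connected rather than merely connected with minimum degree $2$ --- at the price of importing the block--cut-tree machinery and of the explicit distinctness/termination argument for the greedy walk in Case B, which you do carry out correctly (the key points being that a revisited internal vertex would need a third neighbour, and that returning to $v_1$ would yield a second outside neighbour). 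Your Case B also unifies what the paper handles as two separate situations, namely a degree-$2$ cut vertex versus a cut edge joining two cut vertices of degree $\geq 3$. Both arguments are elementary and of comparable length, and either suffices for the way the lemma is used later.
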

\begin{proof}
	If $G$ is 2-connected then we are done. Otherwise $G$ contains a cut vertex. We consider two cases.
	
	The first case is when $G$ has a cut vertex of degree 2. Let $v$ be such a vertex. The graph $G-v$
	has exactly two connected components. One of these components contains one neighbour of $v$ and the other component
	contains the other neighbour. Note that $v$ together with its two neighbours form a binary path in $G$. 
	We extend this path to a maximal binary path $P = (x,v_1, v_2, \ldots, v_k, y)$ in $G$, where $v \in \{ v_1, v_2, \ldots, v_k \}$.
	Since $P$ is maximal, the degree of each of $x$ and $y$ in $G$ is different from $2$. By assumption, their degree cannot be $1$, so
    it is at least $3$. Hence, the degree of $x$ and $y$ in $G - \{ v_1, v_2, \ldots, v_k \}$ is at least 2. Furthermore, the degree of every other
    vertex in $G - \{ v_1, v_2, \ldots, v_k \}$ is the same as in $G$. Consequently, $G$ is a dumbbell with the handle $P$ and the
    plates corresponding to the two connected components of $G - \{ v_1, v_2, \ldots, v_k \}$. The minimum degree of the plates is at least 2.
    
    Assume now that every cut vertex in $G$ has degree at least 3. Let $v$ be such a vertex and
    $S_1, S_2, \ldots, S_t \subseteq V(G)$, $t \geq 2$, be the connected components of $G - v$.
    Note that every connected component has at least two vertices, as otherwise the unique
    vertex in a connected component would have degree 1 in $G$. If in some connected component 
    $v$ has a unique neighbour $u$, then $u$ is a cut vertex in $G$ and hence, by assumption, it has degree at least 3. Then $G$ is a dumbbell with the handle $P=(u,v)$ and the plates that correspond to the two connected components of the graph obtained from $G$ by deleting the cut edge $uv$. 
    If $v$ has more than two neighbours in each
    component, then $G$ is a $(H_1,H_2)$-dumbbell with the handle $P=(v)$ and the two plates $H_1 = G[\{v\} \cup S_1]$ and $H_2 = G[\{v\} \cup S_2 \cup \ldots \cup S_t]$ each with minimum degree at least 2.
\end{proof}

\section{Fractional domatic number}\label{sec:main}

In this section we prove our main \cref{th:main-intro}. 
As discussed in the introduction, if a graph $G$ has an isolated vertex, then its fractional domatic number is 1, and if the miminum degree of $G$ is at least one, then its fractional domatic number is at least 2. 
Moreover, it is easy to conclude that if the minimum degree of $G$ is exactly one, then its domatic number is exactly 2 (which follows e.g. from Theorem 3 \cite{GH21}).
Note that by \cref{lem:FDDisjUnion}, it is enough to prove \cref{th:main-intro} for connected graphs only. Furthermore, by \cref{lem:FDCycles}, the fractional domatic number of a $C_4$ is 2.
Hence, in this section, in order to prove \cref{th:main-intro} we will show

\begin{restatable}{theorem}{mainThm}
	\label{th:main}
	Let $G$ be a connected graph with $\delta(G) \geq 2$ that is different from $C_4$. 
	Then $\fdn(G) > 2$.
\end{restatable}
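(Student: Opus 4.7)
The plan is to prove the claim by induction on $|V(G)|$, applying \cref{lem:deg2decomposition} at the inductive step to split into two subcases: either $G$ is $2$-connected, or $G$ is a $(H_1, H_2)$-dumbbell with both plates connected and of minimum degree at least $2$. If $G$ is $2$-connected and is a cycle $C_n$, the hypothesis $G \neq C_4$ forces $n \neq 4$, and \cref{lem:FDCycles} immediately gives $\fdn(G) > 2$; this also dispatches the smallest base cases of the induction. It remains to handle $2$-connected non-cycles and dumbbells.

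For the $2$-connected non-cycle case, $G$ contains a vertex $v$ of degree at least $3$, and I would construct an explicit $(k,s)$-configuration with $k > 2s$. Starting from a minimal dominating set $D$, \cref{obs:domComp} gives that $V \setminus D$ is also dominating, so the pair $\{D, V \setminus D\}$ already witnesses $\fdn(G) \geq 2$. To push strictly past $2$, I would use the extra neighbour of $v$ to define a third dominating set $D'$ obtained from $D$ (or from $V \setminus D$) by a local modification near $v$ — flipping the membership of $v$ and compensating at one of its neighbours so that domination is preserved — and then take a suitable nonnegative combination of $D$, $V \setminus D$, and $D'$ in which every vertex is covered at most $s$ times while $k > 2s$. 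The key point is that the surplus degree at $v$ is exactly what breaks the rigid pairing between $D$ and $V \setminus D$ that otherwise caps the ratio at $2$.

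For the dumbbell case $G = H_1 \cup H_2 \cup P$ with handle $P$ and plates $H_1, H_2$, each plate is strictly smaller than $G$, and the inductive hypothesis applies whenever $H_i \neq C_4$, yielding a configuration of ratio $> 2$ on $H_i$. If some $H_i$ equals $C_4$, I would treat it directly: the handle attaches at least one extra vertex to $H_i$, which is enough to break the rigid $(4,2)$ structure responsible for $\fdn(C_4) = 2$. Having configurations of ratio $>2$ on (a neighbourhood of) each plate, I would combine them into a configuration on $G$ by aligning them at the handle endpoints and extending across the interior of $P$ via a $3$-periodic dominating pattern on the path, in the style of the construction behind \cref{lem:FDCycles}, so that the slack earned on the plates is preserved across the handle.

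The main obstacle I foresee is the $2$-connected non-cycle case. It cannot be reduced to the cycle lemma by passing to a spanning cycle subgraph: for example $K_{2,3}$ is $2$-connected with $\delta = 2$, every cycle in it is a $4$-cycle, and yet $\fdn(K_{2,3}) = 7/3 > 2$, so any dominating family witnessing this must genuinely use non-cycle dominating sets. A clean uniform route may therefore be LP duality — assume for contradiction that $\fdn(G) \leq 2$, extract a nonnegative weighting $y \colon V(G) \to \R_{\geq 0}$ of total weight $2$ hitting every dominating set with weight at least $1$, and derive enough structural rigidity from $y$ to force either a vertex of degree $1$ or a $C_4$ component, contradicting the hypothesis of \cref{th:main}.
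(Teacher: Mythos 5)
Your skeleton matches the paper's: induction on $|V(G)|$, with \cref{lem:deg2decomposition} splitting into the $2$-connected case and the dumbbell case, and cycles dispatched by \cref{lem:FDCycles}. But the core of the theorem --- the $2$-connected non-cycle case --- is not actually proved in your proposal, and the specific mechanism you suggest cannot work. A certificate that $\fdn(G)>2$ may genuinely require many dominating sets used with multiplicity: your own example $K_{2,3}$ needs a $(7,3)$-configuration (seven sets, each vertex in at most three), and no scheme built from just $D$, $V\setminus D$, and one ``locally modified'' set $D'$ can achieve ratio $>2$ unless the three sets are pairwise disjoint, which fails already for $K_{2,3}$ (its integral domatic number is $2$). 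So ``the surplus degree at $v$ breaks the rigid pairing'' is an intuition, not an argument, and your fallback (LP duality plus unspecified ``structural rigidity'') is likewise only a hope. The paper's actual route is structural: it first shows (\cref{lem:onlyC4}) that a $2$-connected graph all of whose cycles are $4$-cycles is exactly $K_{2,p}$, for which an explicit $(3p-2,p)$-configuration is exhibited (\cref{lem:FDcompleteBip}); otherwise $G$ has a cycle of length $3$ or $\ge 5$, one starts an open ear decomposition there (\cref{th:ear2connected}), and proves that the property ``admits a $(2r+1,r)$-configuration'' survives the addition of each ear. That inductive step is the real technical content: one must first massage the configuration on the smaller graph into an $(x,y)$-\emph{nice} one at the ear's endpoints (\cref{lem:confExt}), and then extend each dominating set across the ear's interior by a mod-$3$ pattern whose phase depends on which of $\D_x,\D_y,\D_{xy},\D_{\overline{xy}}$ the set lies in, with careful counting to keep every interior vertex in at most $r$ sets (\cref{lem:earExtension}). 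Your ``$3$-periodic pattern in the style of the cycle lemma'' gestures at this but omits the niceness condition and the bookkeeping that make it true.

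The dumbbell case has a parallel gap. When a plate equals $C_4$ the induction hypothesis gives you nothing (indeed $\fdn(C_4)=2$ exactly), and ``one extra vertex breaks the rigid $(4,2)$ structure'' must be replaced by explicit constructions: the paper handles $(C_4,C_4)$-dumbbells by a table of $(7,3)$-configurations split by the handle length mod $3$ (\cref{lem:C4C4dumbbell}), and $(C_4,H)$-dumbbells by a separate balanced-partition construction (\cref{lem:C4Hdumbbell}); only the case where both plates already have ratio $>2$ reduces cleanly to the ear-extension machinery (\cref{lem:H1H2dumbbell}). In short, your decomposition is the right one, but everything that makes the theorem nontrivial --- nice configurations, the ear-extension lemma, the $K_{2,p}$ classification, and the explicit $C_4$-plate constructions --- still has to be supplied.
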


\noindent
Using \cref{lem:deg2decomposition}, we will split our analysis into two parts.
In \cref{sec:dumbbells}, we will deal with dumbbells, and in \cref{sec:2-conn} we will tackle 
2-connected graphs. In \cref{sec:proof-main} we will put everything together to prove 
\cref{th:main}. In the rest of this section we introduce necessary notation and prove some auxiliary results.
We start with useful properties of configurations.

\begin{observation}\label{obs:confAdditivity}
	If $\D$ is a $(k,s)$-configuration and 
	$\D'$ is a $(p,q)$-configuration of $G$, then
	$\D \cup \D'$ is a $(k+p, s+q)$-configuration of $G$.
\end{observation}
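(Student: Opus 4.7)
The plan is to verify the two defining properties of a configuration (the correct total count of dominating sets, and the per-vertex coverage bound) for the multiset union $\D \cup \D'$, directly from the hypotheses.

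First I would observe that since $\D$ is a multiset consisting of $k$ dominating sets of $G$ and $\D'$ is a multiset consisting of $p$ dominating sets of $G$, the multiset union $\D \cup \D'$ is a multiset of exactly $k + p$ dominating sets of $G$. This handles the cardinality requirement immediately and uses nothing beyond the definition.

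Next I would verify the coverage bound. Fix an arbitrary vertex $v \in V(G)$. By the hypothesis that $\D$ is a $(k,s)$-configuration, $v$ belongs to at most $s$ of the dominating sets in $\D$ (counted with multiplicity), and similarly it belongs to at most $q$ of the dominating sets in $\D'$. Because multiplicities add under multiset union, the number of dominating sets from $\D \cup \D'$ that contain $v$ equals the sum of the corresponding counts for $\D$ and $\D'$, and hence is at most $s + q$. Since $v$ was arbitrary, every vertex of $G$ is covered at most $s+q$ times by $\D \cup \D'$, which is exactly what is required for $\D \cup \D'$ to be a $(k+p, s+q)$-configuration.

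There is no real obstacle in the argument; the observation is essentially additivity of coverage counts across multiset union. The only point that deserves attention is that configurations are multisets rather than sets, so that a dominating set appearing in both $\D$ and $\D'$ contributes its multiplicities additively to both the total count $k+p$ and the per-vertex count $s+q$; this is consistent with the definition of $(k,s)$-configuration given earlier in \cref{sec:prelim}.
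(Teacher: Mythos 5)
Your proof is correct and follows the only natural route: counting cardinalities and per-vertex coverage additively over the multiset union. The paper states this observation without proof precisely because the argument is this immediate, so your write-up matches the paper's (implicit) reasoning exactly.
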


\begin{observation}\label{obs:2FDN}
	If a graph $G$ has fractional domatic numer greater than 2, then it admits a $(2k+1,k)$-configuration for any sufficiently large $k$.
\end{observation}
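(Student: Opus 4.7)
The plan is to extract a configuration whose ``excess'' is exactly $1$ (i.e.\ of the form $(2s+1,s)$) from any configuration that certifies $\fdn(G) > 2$, and then to pad it with copies of the trivial $(2,1)$-configuration obtained from a maximal independent set and its complement.

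First I would note that if $\fdn(G) > 2$ then $G$ has no isolated vertex (otherwise $\fdn(G)=1$), so by \cref{obs:domComp} a maximal independent set together with its complement gives a $(2,1)$-configuration of $G$. Next, because the supremum in the definition of $\fdn$ is attained, there exists a $(k_0,s_0)$-configuration $\D_0$ with $k_0/s_0 > 2$, i.e.\ $k_0 \geq 2s_0+1$.

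The key observation is that a $(k,s)$-configuration remains a configuration when we drop sets from it: the at-most-$s$ constraint on each vertex is preserved and every remaining set is still dominating. Thus by discarding $k_0 - (2s_0+1)$ arbitrary sets from $\D_0$, we obtain a $(2s_0+1,\,s_0)$-configuration $\D_1$ of $G$.

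Finally, by \cref{obs:confAdditivity}, combining $\D_1$ with $b$ copies of the $(2,1)$-configuration yields a
\[
\bigl(2s_0+1+2b,\; s_0+b\bigr)
= \bigl(2(s_0+b)+1,\; s_0+b\bigr)
\]
configuration for every integer $b \geq 0$. Setting $k = s_0 + b$, this produces a $(2k+1,k)$-configuration for every $k \geq s_0$, which establishes the observation. There is no real obstacle here; the only subtlety I would take care to justify is that dropping sets from a multiset configuration is legitimate (so that we may force the excess $k-2s$ down to exactly $1$ before applying additivity, since otherwise we could not hit odd numerators of the form $2k+1$).
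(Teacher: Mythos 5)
Your proof is correct and follows essentially the same route as the paper's: truncate a configuration witnessing $\fdn(G)>2$ down to a $(2s_0+1,s_0)$-configuration, then pad with copies of a $(2,1)$-configuration obtained via \cref{obs:domComp} and combine using \cref{obs:confAdditivity}. (Your count of discarded sets, $k_0-(2s_0+1)$, is in fact the correct one; the paper's stated count $p-2q+1$ is a minor typo.)
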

\begin{proof}
	By definition, $G$ admits a $(p,q)$-configuration such that $\fdn(G) = p/q > 2$.
	It follows that $p \geq 2q+1$. By removing $p-2q+1$ dominating sets from the $(p,q)$-configuration, we obtain a $(2q+1,q)$-configuration $\D$ of $G$.
	Since $\fdn(G) > 2$, $G$ has no isolated vertices and therefore, by \cref{obs:domComp}, for any minimal dominating set $D \subseteq V(G)$, its complement $\overline{D} = V(G) \setminus D$ is also a dominating set and thus they together form a $(2,1)$-configuration $\D'$. Now, for every $k > q$, from \cref{obs:confAdditivity}, by adding $k-q$ copies of $\D'$ to $\D$
	 we obtain a $(2k+1, k)$-configuration of $G$, as required.
\end{proof}

\noindent
Given a multiset $\D$ of dominating sets of $G$ and two distinct vertices
$x$ and $y$ in $G$ we define the following multisets
\begin{itemize}
	\item $\D_x := \{ D \in \D: x \in D, y \notin D \}$,
	\item $\D_y := \{ D \in \D: x \notin D, y \in D \}$,
	\item $\D_{xy} := \{ D \in \D: x \in D, y \in D \}$,
	\item $\D_{\overline{xy}} := \{ D \in \D: x \notin D, y \notin D \}$.
\end{itemize}

We say that a $(2r+1, r)$-configuration $\D$ of $G$ is \emph{$(x,y)$-nice} if each of 
$x$ and $y$ belongs to exactly $r$ sets in $\D$, and $\D_x$, $\D_y$, and $\D_{xy}$ are all nonempty.

\begin{lemma}\label{lem:earExtension}
	Let $G=(V,E)$ be a graph and let $P = (x, v_1, v_2, \ldots, v_s, y)$ be a binary path in $G$ 
	with at least two vertices. If the graph $H = G - \{ v_1, v_2, \ldots, v_s \}$ has a $(x,y)$-nice  $(2r+1,r)$-configuration, then $G$ has a $(2r+1,r)$-configuration.
\end{lemma}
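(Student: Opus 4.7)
The plan is to produce, for each $D_i \in \D$, an extension $A_i \subseteq \{v_1, \ldots, v_s\}$ and show that $\D' := \{D_i \cup A_i : D_i \in \D\}$ is a $(2r+1, r)$-configuration of $G$. Since each $D_i$ already dominates $V(H)$ and the new vertices $v_1, \ldots, v_s$ have no neighbours outside $P$, the two requirements reduce to: (a) $(D_i \cap \{x,y\}) \cup A_i$ dominates $\{v_1, \ldots, v_s\}$ inside $P$, and (b) each $v_j$ belongs to at most $r$ of the $A_i$'s. Writing $a := |\D_{xy}|$, the niceness hypothesis together with $|\D_x| + |\D_{xy}| = |\D_y| + |\D_{xy}| = r$ and $|\D| = 2r + 1$ forces $1 \le a \le r - 1$, $|\D_x| = |\D_y| = r - a$, and $|\D_{\overline{xy}}| = a + 1$.

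The natural building blocks are the three residue classes $C_t := \{v_j : j \equiv t \pmod 3\}$ for $t \in \{0, 1, 2\}$. A short direct check, splitting on $s \bmod 3$, establishes the following domination pattern in $P$. When $s \not\equiv 2 \pmod 3$, exactly one $C_t$ is a dominating set of $\{v_1, \ldots, v_s\}$; another dominates $\{v_2, \ldots, v_s\}$ but misses $v_1$; the third dominates $\{v_1, \ldots, v_{s-1}\}$ but misses $v_s$. When $s \equiv 2 \pmod 3$, two of the $C_t$ are dominating sets of $\{v_1, \ldots, v_s\}$ and the third dominates only $\{v_2, \ldots, v_{s-1}\}$.

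With this in hand the assignment is essentially forced. For $s \not\equiv 2$: take $A_i$ equal to the full-dominating class for every $D_i \in \D_{\overline{xy}}$; to the class missing $v_1$ for every $D_i \in \D_x$ (where $x$ covers $v_1$); to the class missing $v_s$ for every $D_i \in \D_y$; and split the $a$ members of $\D_{xy}$ arbitrarily between those two partial classes, each of which dominates $\{v_2, \ldots, v_{s-1}\}$. Each $v_j$ lies in exactly one class, so its usage equals the number of slots assigned to that class: $a + 1 \le r$ for the full-dominating class, and $(r-a) + \alpha$ and $(r-a) + (a - \alpha)$ for the two partial classes with $\alpha \in [0, a]$, each bounded by $r$. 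For $s \equiv 2$: use the narrow class for all $a$ sets in $\D_{xy}$ (where $x, y$ handle the endpoints) and split the remaining $2r - a + 1$ slots from $\D_{\overline{xy}} \cup \D_x \cup \D_y$ between the two full-dominating classes; the pair $(\beta, 2r - a + 1 - \beta)$ lies in $[0, r]^2$ precisely because $a \ge 1$.

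The main step is establishing the domination table for $C_0, C_1, C_2$ uniformly across residues of $s$, including the degenerate small-$s$ cases $s \in \{1, 2\}$ in which some classes are empty and the assignment trivialises; this is a short direct computation in each case. Once the domination claims are in hand, the two inequalities $|\D_{\overline{xy}}| = a + 1 \le r$ and $|\D_{xy}| = a \ge 1$, both built into the hypothesis of $(x,y)$-niceness, make the capacity bookkeeping automatic and the construction goes through.
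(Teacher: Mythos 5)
Your proposal is correct and follows essentially the same route as the paper's proof: both use the three residue classes modulo $3$ on the handle's internal vertices and assign them to the sets in $\D_x$, $\D_y$, $\D_{xy}$, $\D_{\overline{xy}}$ according to which endpoints are present, with the same capacity bounds ($|\D_{\overline{xy}}| = |\D_{xy}|+1 \le r$ and $|\D_{xy}|\ge 1$) doing the bookkeeping. Your write-up is marginally more uniform --- you state the domination pattern of the three classes once and then do the assignment generically, where the paper runs three parallel case analyses on $s \bmod 3$ --- but the underlying construction is identical.
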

\begin{proof}
	Let $\D = \{ D_1, D_2, \ldots, D_{2r+1} \}$ be a $(x,y)$-nice $(2r+1,r)$-configuration of $H$. 
	Since $\D$ is $(x,y)$-nice, we have $|\D_x| + |\D_{xy}| = |\D_y| + |\D_{xy}| = r$, $|\D_x| \geq 1$, $|\D_y| \geq 1$, and $|\D_{xy}| \geq 1$.
	The latter together with the fact that $\D_x \cup \D_y \cup \D_{xy} \cup \D_{\overline{xy}}$ is
	a partition of $\D$ implies a simple but important inequality that we will use later
	\begin{equation}\label{eq:empty}
		|\D_{\overline{xy}}| = |\D| - |\D_x| - |\D_y| - |\D_{xy}| = 2r+1- |\D_x| - |\D_y| - |\D_{xy}| 
		\leq 2r - |\D_x| - |\D_y|.
	\end{equation}
	We can assume that $P$ has at least one internal vertex, i.e. $\{ v_1, v_2, \ldots, v_s \} \neq \emptyset$, as otherwise $H$ would coincide with $G$ and the conclusion would trivially hold.
	
	Let $P'$ denote the path consisting of the internal vertices of $P$, i.e.  $P' = (v_1, v_2, \ldots, v_s)$, 
	$s\geq 1$.
	Furthermore, for every $\alpha \in \{ 0, 1, 2 \}$, we define 
	$R_{\alpha} = \{ v_i : i \equiv \alpha {\pmod {3}}, i \in [s] \}$.
	Notice that $R_1, R_2, R_3$ are pairwise disjoint sets.
	We are now ready to define $\D'$ by extending every dominating set of $H$ in $\D$ to a dominating set of $G$.
	Later we will show that $\D'$ is a $(2r+1,r)$-configuration of $G$. 
	We consider three cases.
	\begin{enumerate}
		\item If $s \equiv 0 {\pmod {3}}$, then 
		\begin{enumerate}
			\item $R_0$ dominates all vertices of $P'$ except $v_1$, and for every $D \in \D_x$
			we let $D' = D \cup R_0$;
			\item $R_1$ dominates all vertices of $P'$ except $v_s$, and for every $D \in \D_y$
			we let $D' = D \cup R_1$;
			\item $R_2$ dominates all vertices of $P'$, and for every $D \in \D_{\overline{xy}}$ we let
			$D' = D \cup R_2$;
			\item for every $D \in \D_{xy}$ we define $D' = D \cup R_0$.
		\end{enumerate}
		
		\item If $s \equiv 1 {\pmod {3}}$, then 
		\begin{enumerate}
			\item $R_0$ dominates all vertices of $P'$ except $v_1$, and for every $D \in D_x$
			we let $D' = D \cup R_0$;
			\item $R_1$ dominates all vertices of $P'$, and for every $D \in \D_{\overline{xy}}$
			we let $D' = D \cup R_1$;
			\item $R_2$ dominates all vertices of $P'$, except $v_s$, and for every $D \in \D_{y}$ we let
			$D' = D \cup R_2$;
			\item for every $D \in \D_{xy}$ we define $D' = D \cup R_0$.
		\end{enumerate}
		
		\item If $s \equiv 2 {\pmod {3}}$, then 
		\begin{enumerate}
			\item $R_0$ dominates all vertices of $P'$ except $v_1$ and $v_s$, and for every 
			$D \in D_{xy}$ we let $D' = D \cup R_0$;
			\item $R_1$ dominates all vertices of $P'$, and for every $D \in \D_x$
			we let $D' = D \cup R_1$;
			\item $R_2$ dominates all vertices of $P'$, and for every $D \in \D_y$ we let
			$D' = D \cup R_2$;
			
			\item first, we partition $\D_{\overline{xy}}$ arbitrary into two parts $\D_{\overline{xy}}^1$ and $\D_{\overline{xy}}^2$, 
			with $|\D_{\overline{xy}}^1| = \min \{ r - |\D_x|, |\D_{\overline{xy}}| \}$;
			second, for every $D \in \D_{\overline{xy}}^1$ we define $D' = D \cup R_1$, and 
			for every $D \in \D_{\overline{xy}}^2$ we define $D' = D \cup R_2$.
		\end{enumerate}
	\end{enumerate}
	
	We are now going to show that $\D'$ is a $(2r+1,r)$-configuration in $G$.
	It is clear from the construction that every set in $\D'$ is a dominating set in $G$ and $|\D'| = |\D| = 2r+1$.
	Hence, it remains to show that every vertex in $G$ belongs to at most $r$ sets in $\D'$.
	In fact, since $D' \setminus D \subseteq \{ v_1, v_2, \ldots, v_s \}$ for every $D \in \D$, we only have to
	prove that every vertex in $\{ v_1, v_2, \ldots, v_s \}$ is covered by at most $r$ sets in $\D'$.
	
	We will show the latter depending on the value of $s$ modulo 3.
	The cases $s \equiv 0 {\pmod {3}}$ and $s \equiv 1 {\pmod {3}}$ are similar, 
	so we consider only one of them.
	
	Let $s \equiv 0 {\pmod {3}}$.
	By definition, $R_0, R_1$, and $R_2$ are pairwise disjoint, and therefore we can treat vertices 
	$v_1, v_2, \ldots, v_s$ depending on which of the three sets they belong to.
	By construction, all vertices in $R_0$ belong to $|\D_x|+|\D_{xy}| \leq r$ dominating sets.
	Similarly, all vertices of $R_1$ belong to $|\D_y| \leq r$ dominating sets.
	Finally, all vertices in $R_2$ belong to $|\D_{\overline{xy}}|$ dominating sets and 
	$$
	|\D_{\overline{xy}}| = |\D| - |\D_x| - |\D_y| - |\D_{xy}| = 2r+1 - (|\D_x| + |\D_y| + |\D_{xy}|) \leq r,
	$$
	where the latter inequality follows from the fact that $|\D_x| + |\D_y| + |\D_{xy}| \geq r + 1$.

	Let now $s \equiv 2 {\pmod {3}}$.
	By construction, all vertices in $R_0$ belong to $|\D_{xy}| \leq r$ dominating sets.
	Similarly, all vertices of $R_1$ belong to 
	$|\D_x| + |D_{\overline{xy}}^1| = |\D_x| + \min \{ r - |\D_x|, |\D_{\overline{xy}}| \} \leq r$ dominating sets.
	Finally, all vertices in $R_2$ belong to $|\D_y| + |D_{\overline{xy}}^2|$ sets and
	\begin{equation*}
		\begin{split}
			|\D_y| + |D_{\overline{xy}}^2| &= |\D_y| + |\D_{\overline{xy}} \setminus D_{\overline{xy}}^1| \\
			&= |\D_y| + |\D_{\overline{xy}}| - \min\{ r - |\D_x|, |\D_{\overline{xy}}| \} \\
			&= \max\{ |\D_y| + |\D_{\overline{xy}}| - (r - |\D_x|), |\D_y| \}\\
			&\leq \max\{ |\D_y| + 2r - |\D_x| - |\D_y| - (r - |\D_x|), |\D_y| \}\\
			&= \max\{ r, |\D_y| \} = r,
		\end{split}
	\end{equation*}
	where in the inequality we used (\ref{eq:empty}).	
\end{proof}

\subsection{Dumbbells}
\label{sec:dumbbells}

\subsubsection{$(C_4,C_4)$-dumbbells}

\begin{lemma}
	\label{lem:C4C4dumbbell}
	Let $G$ be a $(C_4,C_4)$-dumbbell. Then $G$ admits a $(7,3)$-configuration.
\end{lemma}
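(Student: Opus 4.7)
My plan is to split $(C_4,C_4)$-dumbbells into three cases by handle length $|V(P)|$: (i) $|V(P)|=1$, where the two plates share a single vertex; (ii) $|V(P)|=2$, where the plates are joined by an edge $xy$; and (iii) $|V(P)|\geq 3$, where the handle has $t\geq 1$ internal vertices $z_1,\ldots,z_t$.

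For case (i) the graph has $7$ vertices and the budget $21$ is tight, so every dominating set in a $(7,3)$-configuration must have size exactly $3$ and every vertex must be covered exactly $3$ times. Exploiting the bowtie symmetry (swap the two $C_4$'s, and independently flip each one) I would list seven size-$3$ dominating sets, for instance $\{x,b,e\}$, $\{a,c,e\}$, $\{b,d,f\}$, $\{a,d,x\}$, $\{c,f,x\}$ together with repetitions of $\{a,c,e\}$ and $\{b,d,f\}$, and check that every vertex occurs exactly three times. For case (ii) the graph has $8$ vertices and the looser budget $24$ allows a mix of size-$3$ and size-$4$ dominating sets; I would construct seven such sets, for example $\{x,b,e\},\{y,b,e\},\{b,d,y\},\{e,a,x\}$, two copies of $\{a,c,d,f\}$, and $\{x,y,c,f\}$, arranged so that each vertex appears exactly three times and each of $\D_x,\D_y,\D_{xy}$ is non-empty, i.e.\ the configuration is $(x,y)$-nice.

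For case (iii), a naive application of \cref{lem:earExtension} to the full handle does not help directly: $G-\{z_1,\ldots,z_t\}$ is the disjoint union of two $C_4$'s, whose fractional domatic number is only $2$, and which therefore admits no $(x,y)$-nice $(7,3)$-configuration. I would instead take the case-(ii) configuration as a template and reproduce, directly on $G$, the residue-class construction inside the proof of \cref{lem:earExtension}: partition $\{z_1,\ldots,z_t\}$ into $R_0,R_1,R_2$ by index modulo $3$, and adjoin each $R_\alpha$ to an appropriate subfamily of the case-(ii) configuration following the $t\bmod 3$ case split of that lemma. The per-vertex bound on the handle vertices then follows from $(x,y)$-niceness (each $|\D_\bullet|\leq 3$), and the domination of $x$ and $y$ in $G$ follows from the additional \emph{strong} property that the sets in $\D_{\overline{xy}}$ meet both $\{a,c\}$ and $\{d,f\}$ --- a property the case-(ii) configuration above already enjoys.

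The main obstacle is case (iii): \cref{lem:earExtension} provides exactly the right framework but its hypothesis does not hold literally, so its proof has to be re-executed on the case-(ii) configuration by hand, and the case-(ii) configuration itself may need a small adjustment (for instance, shifting which size-$4$ sets lie in $\D_x,\D_y$) to handle the residues $t\equiv 1,2\pmod 3$, where the chosen $R_\alpha$'s do not automatically include $z_1$ or $z_t$. Cases (i) and (ii) are tight finite constructions verifiable by direct inspection.
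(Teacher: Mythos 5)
Your cases (i) and (ii) check out: the multiplicity counts are exact, and under a suitable labelling of the plates (for case (ii), take the plates to be $x\text{--}a\text{--}b\text{--}c\text{--}x$ and $y\text{--}d\text{--}e\text{--}f\text{--}y$) all seven sets in each list are dominating, and the case-(ii) family is indeed $(x,y)$-nice. You are also right that \cref{lem:earExtension} cannot be invoked for long handles: $G-\{z_1,\ldots,z_t\}=C_4\cup C_4$ has fractional domatic number $2$ by \cref{lem:FDCycles,lem:FDDisjUnion}, so it admits no $(7,3)$-configuration at all, nice or otherwise.

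The genuine gap is case (iii), and it is larger than the ``small adjustment'' you anticipate. Already for $t=1$ (handle $x\text{--}z_1\text{--}y$) your case-(ii) template cannot be extended. Both sets of $\D_x$, namely $\{x,b,e\}$ and $\{e,a,x\}$, contain $e$ (the vertex of the second plate opposite $y$) and neither of $y$'s plate-neighbours $d,f$; once the edge $xy$ is replaced by the path through $z_1$, each of them dominates $y$ only if its extension contains $z_1$. Symmetrically, both sets of $\D_y$ contain $b$ but neither of $a,c$, so each dominates $x$ only if its extension contains $z_1$. Hence four of the seven extended sets must contain $z_1$, violating the coverage bound of $3$. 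This is a counting obstruction, not a residue-class bookkeeping issue: with $t=1$ the only freedom is which sets receive $z_1$, so no reassignment of the $R_\alpha$'s, and no ``shifting of the size-$4$ sets,'' can repair it. What is actually needed is a differently designed template in which at most three sets fail to dominate $x$ inside $C_4\cup C_4$, at most three fail to dominate $y$, and for $t=1$ at most three fail in total --- i.e.\ most sets of $\D_x$ must already contain a plate-neighbour of $y$ and vice versa --- together with the per-residue verification you defer. You have not exhibited such a template, so the case that carries all the content of the lemma remains unproved. For comparison, the paper avoids the short/long dichotomy and the extension machinery entirely: it indexes the whole handle $(d_1,\ldots,d_s)$ (endpoints included) by residues modulo $3$ and writes down in \cref{tab:C4C4} three explicit lists of seven dominating sets that work uniformly for every $s\geq 1$; the price is a purely finite verification, which is exactly the kind of explicit construction your plan ultimately has to produce anyway.
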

\begin{proof}
	Let $a,b,c, d_1, d_2, \ldots, d_s, e, f, g$ be the vertices of $G$, where $(a,b,d_1,c,a)$ and $(d_s,e,g,f,d_s)$ 
	are the $C_4$ plates of the dumbbell, and $P=(d_1, d_2, \ldots, d_s)$, $s \geq 1$ is its handle.
	
	We define three pairwise disjoint sets $R_{\alpha} = \{ d_i : i \equiv \alpha {\pmod {3}}, i \in [s] \}$, $\alpha \in \{ 0, 1, 2 \}$. It is straightforward to check that, depending on the value of $s$ modulo 3, each of the three families of sets in \cref{tab:C4C4} is a $(7,3)$-configuration of $G$.
	
	\setlength{\tabcolsep}{0.5em} 
	{\renewcommand{\arraystretch}{1.5}
	\begin{table}[ht]
		\centering
		\begin{tabular}{|p{40mm}|p{40mm}|p{40mm}|}
			\hline
			$s \equiv 0 {\pmod {3}}$ & $s \equiv 1 {\pmod {3}}$ & $s \equiv 2 {\pmod {3}}$  \\ \hline	
			$R_0 \cup \{b, c, e\}$ \newline
			$R_0 \cup R_1 \cup \{c, f\}$ \newline
			$R_0 \cup R_1 \cup \{b, e\}$ \newline
			$R_1 \cup \{b, e, f\}$ \newline
			$R_2 \cup \{a, g\}$ \newline
			$R_2 \cup \{a, g\}$ \newline
			$R_2 \cup \{a, g\}$ 
			&  
			$R_0 \cup \{a, c, g\}$ \newline
			$R_0 \cup \{b, c, g\}$ \newline
			$R_1 \cup \{b, g\}$ \newline
			$R_1 \cup \{c, e\}$ \newline
			$R_1 \cup \{b, f\}$ \newline
			$R_2 \cup \{a, e, f\}$ \newline
			$R_2 \cup \{a, e, f\}$ 
			& 
			$R_0 \cup \{a, c, e, f\}$ \newline
			$R_1 \cup \{b, g\}$ \newline
			$R_1 \cup \{b, g\}$ \newline
			$R_1 \cup \{c, g\}$ \newline
			$R_2 \cup \{a, e\}$ \newline
			$R_2 \cup \{a, e\}$ \newline
			$R_2 \cup \{b, c, f\}$ \\ \hline
		\end{tabular}
	\caption{$(7,3)$-configurations of $G$ depending on the value of $s$ modulo 3.}
	\label{tab:C4C4}
	\end{table}	
	}
\end{proof}

\subsubsection{$(C_4,H)$-dumbbells}

Let $S$ be an $n$-element set. A partition of $S$ into $k \leq n$ sets is \emph{balanced} if the cardinalities of any two parts in the partition differ at most by one. Clearly, from the definition, the cardinality of any part is either 
$\lceil n/k \rceil$ or $\lfloor n/k \rfloor$.

\begin{lemma}
	\label{lem:C4Hdumbbell}
	Let $G$ be a $(C_4, H)$-dumbbell. If $H$ admits a $(2k+1,k)$-configuration, for $k \geq 3$, then $G$ does too.
\end{lemma}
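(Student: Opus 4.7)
The plan is to extend the given $(2k+1,k)$-configuration $\D=\{D_1,\ldots,D_{2k+1}\}$ of $H$ to one of $G$ by appending to each $D_i$ a set $T_i \subseteq V(C_4) \cup \{v_1,\ldots,v_s\}$. Label the $C_4$-plate cyclically as $(a,b,c,d,a)$, with $d$ attached to the handle $(d,v_1,\ldots,v_s,y)$ and $y \in V(H)$. The degenerate case where the handle is a single vertex shared between $C_4$ and $H$ will be handled separately with the same ideas, noting that the shared vertex has degree at least $2$ in $H$ so the argument there is even more flexible.

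On $V(C_4)$ I will use five patterns for $T_i \cap V(C_4)$, partitioning the $2k+1$ indices into classes of sizes $2,\,k-2,\,1,\,1,\,k-1$ and receiving $\{b\}$, $\{b,d\}$, $\{a,d\}$, $\{c,d\}$, $\{a,c\}$ respectively (so the total is $2k+1$, which uses $k \geq 3$). A short count gives that each vertex of $C_4$ is covered exactly $k$ times, and in every pattern the set $\{a,b,c\}$ is dominated directly; the vertex $d$ is dominated internally except for the ``singleton'' classes $\{b\}$ and $\{a,c\}$, which must be supported by the handle extension.

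On $\{v_1,\ldots,v_s\}$ I will use the mod-$3$ rotation $R_\alpha = \{v_j : j \equiv \alpha \pmod 3\}$ of Lemma~\ref{lem:earExtension}: each $D_i'$ receives one of $R_0,R_1,R_2$, assigned so that the three groups have sizes differing by at most one; each $v_j$ is then covered at most $\lceil(2k+1)/3\rceil \leq k$ times. The assignment is tuned to the $C_4$-class of $D_i$ so that (i) every internal handle vertex is automatically dominated by any $R_\alpha$; (ii) $v_1$ is dominated in every $D_i'$, which forces the $\{b\}$-class into $R_1$ (providing $v_1$ itself) and the $\{a,c\}$-class into $R_2$ (providing the neighbour $v_2$ of $v_1$); and (iii) $v_s$ is dominated, which forces a specific residue $\alpha^* \in \{0,1,2\}$ (depending on $s \bmod 3$) to contain only sets $D_i$ with $y \in D_i$.

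The principal obstacle is constraint (iii): the class of $D_i$'s assigned residue $\alpha^*$ must lie inside $\D_y$. Concretely, for each residue of $s \bmod 3$ one must verify that this inclusion can be realized while honouring (ii) and the coverage budget, which reduces to checking that $|\D_y|$ meets a small threshold. I expect this threshold to be achievable either directly from the freedom in distributing $\D$'s sets to classes, or, in the worst case, after a preliminary re-balancing of $\D$ by swapping a few sets for complements of minimal dominating sets via Observation~\ref{obs:domComp}. This is a careful but essentially mechanical case analysis, analogous to the tables used in the proof of Lemma~\ref{lem:C4C4dumbbell} but parametric in $k \geq 3$.
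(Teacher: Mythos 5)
Your plan follows essentially the same route as the paper: extend each set of the given $(2k+1,k)$-configuration of $H$ individually, using the three residue classes $R_0,R_1,R_2$ on the handle together with hand-crafted patterns on the $C_4$, with a case analysis on the handle length modulo $3$. (The paper fuses the two choices into combined patterns assigned to a balanced partition of $\D$ split according to membership of the $H$-endpoint; your decoupled bookkeeping, with each $C_4$-vertex covered exactly $k$ times by the five patterns, is a legitimate variant.)

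However, there is a genuine gap at exactly the point you flag as the ``principal obstacle,'' and the fallback you propose for it does not work. The class of sets assigned the residue $\alpha^*$ that misses both $v_s$ and $v_{s-1}$ must lie inside $\D_y$, and since every one of the three residue groups must be nonempty (each has size at most $k$ and they total $2k+1$), this fails outright if $\D_y=\emptyset$ --- which is possible for an arbitrary $(2k+1,k)$-configuration of $H$. The repair the paper uses is the one-line normalization ``without loss of generality $y$ lies in exactly $k$ sets of $\D$'': one simply adds $y$ to sets not containing it, which preserves domination and the coverage budget. Your proposed alternative --- swapping some members of $\D$ for complements of minimal dominating sets via \cref{obs:domComp} --- is not sound as stated: replacing a set of $\D$ by an unrelated dominating set changes the multiplicity of \emph{every} vertex of $H$ and can push some vertex above $k$. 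With the normalization $|\D_y|=k$ in place, your assignment does go through in all three residue cases (and the remaining checks are routine), but as written the proof is a plan whose crucial step is unverified and whose suggested completion is flawed. Two smaller slips: in your labelling $(a,b,c,d,a)$ the vertex $d$ is adjacent to both $a$ and $c$, so the $\{a,c\}$-class \emph{does} dominate $d$ internally (only the $\{b\}$-class genuinely needs $v_1$); and the residue groups need not, and in general cannot, be balanced once constraints (ii) and (iii) are imposed --- the property actually needed is only that each group has size at most $k$.
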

\begin{proof}
	Let $(b,a,c,d_1,b)$ be the $C_4$ plate of the dumbbell and $P=(d_1,d_2, \ldots, d_s)$, $s \geq 1$ be the handle of
	the dumbbell. 
	Let $\D$ be a $(2k+1,k)$-configuration of $H$, $\D_{d_s}$ be the family of dominating sets in $\D$ that contain $d_s$,
	and $\D_{\overline{d_s}}$ be the family of dominating sets that do not contain $d_s$, i.e. $\D_{\overline{d_s}} = \D \setminus \D_{d_s}$.
	Without loss of generality we assume that $|\D_{d_s}| = k$.
	
	We break down the analysis into three cases depending on the value of $s$ modulo 3. 
	As before we define three pairwise disjoint sets $R_{\alpha} = \{ d_i : i \equiv \alpha {\pmod {3}}, i \in [s] \}$, $\alpha \in \{ 0, 1, 2 \}$.
	
	Suppose first that $s \equiv 0 {\pmod {3}}$. Let $\D_{d_s} = \D_{d_s}^1 \cup \D_{d_s}^2$ be an arbitrary balanced partition of $\D_{d_s}$, and let $\D_{\overline{d_s}} = \D_{\overline{d_s}}^1 \cup \D_{\overline{d_s}}^2 \cup \D_{\overline{d_s}}^3$ be an arbitrary balanced partition of $\D_{\overline{d_s}}$. 
	Furthermore, if $k=3$ we assume that $|\D_{d_s}^1| = 1$,  $|\D_{d_s}^2| = 2$, and 
	$|\D_{\overline{d_s}}^1| = |\D_{\overline{d_s}}^2| = 1$ and $|\D_{\overline{d_s}}^3| = 2$.
	We define the desired $(2k+1,k)$-configuration $\F$ of $G$ as the union of the following five multisets:
	\begin{enumerate}
		\item $\F_1 = \left\{ R_0 \cup \{b, c\} \cup D ~|~ D \in \D_{d_s}^1 \right\}$
		\item $\F_2 = \left\{ R_0 \cup R_2 \cup \{a\} \cup D ~|~ D \in \D_{d_s}^2 \right\}$
		\item $\F_3 = \left\{  R_2 \cup \{a\} \cup D ~|~ D \in \D_{\overline{d_s}}^1 \right\}$
		\item $\F_4 = \left\{  R_1 \cup \{b\} \cup D ~|~ D \in \D_{\overline{d_s}}^2 \right\}$
		\item $\F_5 = \left\{  R_1 \cup \{c\} \cup D ~|~ D \in \D_{\overline{d_s}}^3 \right\}$
	\end{enumerate}
	It is easy to check that all sets in $\F$ are dominating sets of $G$.
	Furthermore, notice that $\F$ is obtained by extending sets in $\D$, so the two configurations have the same cardinality $2k+1$. It remains to verify that every vertex of $G$ is covered by at most $k$ sets in $\F$. 
	This is clearly the case for any vertex of $H$ that is different from $d_s$. For $d_s$, we observe that it belongs to $R_0$, and $R_0$ is added only to the sets from $\D_{d_s}$. Hence $d_s$ is covered 
	in $\F$ by the same number of sets as in $\D$, i.e. by exactly $k$ sets. 
	Similarly, all the other vertices in $R_0$ are covered by exactly $k$ sets.
	Vertex $a$ and each of the vertices in $R_2$ are covered by $|\F_2| + |\F_3| = |\D_{d_s}^2| + |\D_{\overline{d_s}}^1|$ sets. This number is equal to $k$ if $k = 3$ and it is at most 
	$\lceil k/2 \rceil + \lceil (k+1)/3 \rceil \leq k$ if $k \geq 4$.
	Similarly, vertex $b$ is covered by $|\F_1| + |\F_4| = |\D_{d_s}^1| + |\D_{\overline{d_s}}^2|$ sets and
	vertex $c$ is covered by $|\F_1| + |\F_5| = |\D_{d_s}^1| + |\D_{\overline{d_s}}^3|$ sets. As before, in both cases, the number of covering sets is at most $k$. Finally, every vertex in $R_1$ is covered by 
	$|\F_4| + |\F_5| = |\D_{\overline{d_s}}^2| + |\D_{\overline{d_s}}^3|$ sets, which is equal to $k$ if $k=3$
	and does not exceed $2 \lceil (k+1)/3 \rceil \leq k$ if $k \geq 4$.
	
	Assume now that $s \equiv 1 {\pmod {3}}$. Let $\D_{d_s} = \D_{d_s}^1 \cup \D_{d_s}^2 \cup \D_{d_s}^3$ be an arbitrary balanced partition of $\D_{d_s}$, and let $\D_{\overline{d_s}} = \D_{\overline{d_s}}^1 \cup \D_{\overline{d_s}}^2$ be an arbitrary balanced partition of $\D_{\overline{d_s}}$.
	We define the desired $(2k+1,k)$-configuration $\F$ of $G$ as the union of the following five multisets:
	\begin{enumerate}
		\item $\F_1 = \left\{ R_1 \cup \{a\} \cup D ~|~ D \in \D_{d_s}^1 \right\}$
		\item $\F_2 = \left\{ R_1 \cup \{b\} \cup D ~|~ D \in \D_{d_s}^2 \right\}$
		\item $\F_3 = \left\{ R_1 \cup \{c\} \cup D ~|~ D \in \D_{d_s}^3 \right\}$
		\item $\F_4 = \left\{ R_0 \cup \{b,c\} \cup D ~|~ D \in \D_{\overline{d_s}}^1 \right\}$
		\item $\F_5 = \left\{ R_2 \cup \{a\} \cup D ~|~ D \in \D_{\overline{d_s}}^2 \right\}$
	\end{enumerate}
	Following similar reasoning as in the previous case, on can check that $|\F| = |\D|$, the sets in $\F$ are dominating sets of $G$, and all vertices of $H$ are covered by the same number of sets in $\F$ as in $\D$. 
	Furthermore, every vertex in $R_1$ is covered by exactly $k$ sets, and every other vertex outside $V(H)$ is covered by at most $\lceil k/3 \rceil + \lceil (k+1)/2 \rceil \leq k$ sets in $\F$. Hence, $\F$ is a
	$(2k+1,k)$-configuration of $G$.
	
	Finally, assume that $s \equiv 2 {\pmod {3}}$. Let $\D_{\overline{d_s}} = \D_{\overline{d_s}}^1 \cup \D_{\overline{d_s}}^2 \cup \D_{\overline{d_s}}^3$ be an arbitrary balanced partition of $\D_{\overline{d_s}}$.
	We define the desired $(2k+1,k)$-configuration $\F$ of $G$ as the union of the following four multisets:
	\begin{enumerate}
		\item $\F_1 = \left\{ R_2 \cup \{a\} \cup D ~|~ D \in \D_{d_s} \right\}$
		\item $\F_2 = \left\{ R_1 \cup \{b\} \cup D ~|~ D \in \D_{\overline{d_s}}^1 \right\}$
		\item $\F_3 = \left\{ R_1 \cup \{c\} \cup D ~|~ D \in \D_{\overline{d_s}}^2 \right\}$
		\item $\F_4 = \left\{ R_0 \cup \{b,c\} \cup D ~|~ D \in \D_{\overline{d_s}}^3 \right\}$
	\end{enumerate}
	Using arguments similar to those used in the previous two cases, it is not hard to verify that $\F$ is indeed a $(2k+1,k)$-configuration of $G$.
\end{proof}

\subsubsection{$(H_1,H_2)$-dumbbells}

\begin{lemma}
	\label{lem:H1H2dumbbell}
	Let $G$ be a $(H_1, H_2)$-dumbbell such that $\delta(H_1) \geq 2$ and $\delta(H_2) \geq 2$.
	If $H_1$ admits a $(2r+1,r)$-configuration and $H_2$ admits a $(2k+1,k)$-configuration, where $r \geq k \geq 2$, then $G$ admits a $(2r+1,r)$-configuration.
\end{lemma}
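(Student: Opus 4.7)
My plan is to construct a $(2r+1,r)$-configuration of $G$ by first producing $(2r+1,r)$-configurations on both plates and then fusing them along the handle $P$. The fusion will ultimately invoke \cref{lem:earExtension}, so the crux is to build a combined configuration on $V(H_1)\cup V(H_2)$ that is $(x,y)$-nice, where $x\in V(H_1)$ and $y\in V(H_2)$ are the endpoints of $P$. First I would boost $H_2$'s configuration from size $2k+1$ up to $2r+1$: since $\delta(H_2)\geq 2$, \cref{obs:domComp} applied to any minimal dominating set of $H_2$ furnishes a $(2,1)$-configuration, and combining the given $(2k+1,k)$-configuration with $r-k$ copies of this $(2,1)$-configuration via \cref{obs:confAdditivity} produces the desired $(2r+1,r)$-configuration. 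Write $\mathcal{A}=\{A_1,\dots,A_{2r+1}\}$ and $\mathcal{B}=\{B_1,\dots,B_{2r+1}\}$ for the resulting configurations of $H_1$ and $H_2$.

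If $|V(P)|=1$, the handle is a single cut vertex $v$ common to $H_1$ and $H_2$, and \cref{lem:earExtension} does not apply directly. In this case I would pair the two families so that every $A$-set containing $v$ is matched with some $B$-set containing $v$ whenever possible (feasible because at most $r$ sets in each family contain $v$), and form the multiset $\{A_i\cup B_i\}_{i=1}^{2r+1}$. Each union is a dominating set of $G$ because it dominates both plates, every vertex $u\neq v$ inherits its coverage from the relevant plate and is therefore covered at most $r$ times, and $v$ itself ends up covered $\max(a,b)\leq r$ times, where $a$ and $b$ count the sets of $\mathcal{A}$ and $\mathcal{B}$ containing $v$. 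Thus $G$ has a $(2r+1,r)$-configuration.

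If $|V(P)|\geq 2$, then $H_1$ and $H_2$ are vertex disjoint. I would first inflate $\mathcal{A}$ so that $x$ lies in exactly $r$ of its sets (add $x$ to as many currently $x$-free sets as needed; this preserves domination and only raises $x$'s coverage), and similarly inflate $\mathcal{B}$ so that $y$ lies in exactly $r$ of its sets. Then I would pair the two families so that exactly $1$ pair has $x\in A,\ y\in B$; exactly $r-1$ pairs have $x\in A,\ y\notin B$; exactly $r-1$ pairs have $x\notin A,\ y\in B$; and $2$ pairs have neither. This is realizable because each family splits as $r$ sets containing the marked vertex and $r+1$ that do not, and the assumption $r\geq k\geq 2$ ensures $r-1\geq 1$ so that $\D_x,\D_y,\D_{xy}$ are all nonempty. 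The pairwise unions then form an $(x,y)$-nice $(2r+1,r)$-configuration on $H_1\cup H_2$; when $|V(P)|=2$ this already lives on $G$, and when $|V(P)|\geq 3$ an application of \cref{lem:earExtension} upgrades it to a $(2r+1,r)$-configuration of $G$.

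The main obstacle I foresee is the combinatorial pairing step when $|V(P)|\geq 2$: one must simultaneously control the coverage of both $x$ and $y$ and ensure that all three buckets $\D_x$, $\D_y$, $\D_{xy}$ are nonempty. The $|V(P)|=1$ case needs its own argument because $(x,y)$-niceness is defined only for distinct $x\neq y$, but it turns out to be simpler since there is no handle to extend across and only the single shared cut vertex requires care.
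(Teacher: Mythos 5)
Your proposal is correct and follows essentially the same route as the paper: boost the $H_2$ configuration to $(2r+1,r)$ via \cref{obs:domComp} and \cref{obs:confAdditivity}, handle the shared-vertex case by aligning the sets containing the cut vertex, and in the disjoint case pair the two families into an $(x,y)$-nice configuration on $H_1\cup H_2$ before invoking \cref{lem:earExtension}. The only (immaterial) difference is your choice of pairing counts ($|\D_{xy}|=1$ and $|\D_x|=|\D_y|=r-1$, versus the paper's $|\D_{xy}|=r-1$ and $|\D_x|=|\D_y|=1$), both of which are realizable and $(x,y)$-nice since $r\geq 2$.
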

\begin{proof}
	We start by observing that since $H_2$ admits a $(2k+1,k)$-configuration, it also admits a $(2r+1,r)$-configuration. Indeed, let $D$ be a minimal dominating set in $H_2$. 
	As $H_2$ does not contain isolated vertices, by \cref{obs:domComp}, 
	$\overline{D} = V(H_2) \setminus D$ is also a dominating set in $H_2$. 
	Hence, $\{ D, \overline{D} \}$ is a $(2,1)$-configuration of $H_2$. 
	Therefore, \cref{obs:confAdditivity} implies that by extending a $(2k+1,k)$-configuration of $H_2$ with $r-k$ copies of $\{ D, \overline{D} \}$ we obtain a $(2r+1,r)$-configuration of $H_2$.
	
	Let $\D = \{ D_1, D_2, \ldots, D_{2r+1} \}$ be a $(2r+1,r)$-configuration of $H_1$ and 
	$\D' = \{ D_1', D_2', \ldots, D_{2r+1}' \}$ be a $(2r+1,r)$-configuration
	of $H_2$. Let also $P$ be the handle of $G$. Note that $H_1$ and $H_2$ either have exactly one vertex in common (if $|V(P)|  = 1$), or they are vertex disjoint (if $|V(P)| \geq 2$).
	Observe that in either case, for any $D_i \in \D$ and $D_j' \in \D'$, $i,j \in [2r+1]$, the set $D_i \cup D_j'$ is a dominating set in $H_1 \cup H_2$.
	
	Suppose first that $H_1$ and $H_2$ have one vertex in common, which we denote by $x$.
	Without loss of generality assume that the dominating sets in $\D$ are indexed in such a way
	that $x$ belongs to $D_i$ if and only if $i \in [t]$ for some $t \leq r$. Similarly, assume that
	the sets in $\D'$ are indexed in such a way that $x \in D_i'$ if and only if $i \in [t']$ for some $t' \leq r$. Then it is easy to see that $\{ D_i \cup D_i' ~|~ i \in [2r+1] \}$ is a $(2r+1,r)$-configuration of $G = H_1 \cup H_2$. 
	
	Suppose now that $H_1$ and $H_2$ are vertex disjoint and the handle $P = (x, v_1, v_2, \ldots, v_s, y)$ has at least two vertices: $x \in V(H_1)$ and $y \in V(H_2)$. 
	By definition, $P$ is a binary path in $G$ and $G - \{v_1, v_2, \ldots, v_s \} = H_1 \cup H_2$. 
	In this case, the statement immediately follows from \cref{lem:earExtension} if we 
	show that $H_1 \cup H_2$ admits a $(x,y)$-nice $(2r+1,r)$-configuration. 
	To prove the latter, we can assume that $x$ belongs to exactly $r$ sets in $\D$. 
	Indeed, otherwise we can add $x$ to some sets in $\D$ that do not contain $x$ to ensure
	the property. Similarly, we can assume that $y$ belongs to exactly $r$ sets in $\D'$. Without loss of generality, suppose that $x \in D_i$ and $y \in D_i'$ if and only if $i \in [r]$. Then 
	$$
		\D'' = \{ D_1 \cup D_{r+1}' \} \cup \{ D_1' \cup D_{r+1} \} \cup \{ D_i \cup D_i' ~|~ i \in [2r+1] \setminus \{1,r+1\} \}
	$$
	is a $(2r+1,r)$-configuration of $H_1 \cup H_2$ and each of $x$ and $y$ belongs to exactly $r$ sets in $\D''$, and $\D_x''$, $\D_y''$, and $\D_{xy}''$ are all nonempty, i.e.
	$\D''$ is $(x,y)$-nice. This completes the proof.
\end{proof}

\subsection{2-connected graphs}
\label{sec:2-conn}

The main goal of this subsection is to prove that the fractional domatic number
of any 2-connected graph $G$, which is distinct from $C_4$, is more than 2.
To this end we will employ ear decompositions. 
An \emph{ear} in a graph is either a cycle with at least 3 vertices or a path.

\begin{definition}
	An \emph{open ear decomposition} of $G=(V,E)$ is a sequence $P_1, P_2, \ldots, P_t$ of ears in $G$
	such that 
	\begin{enumerate}
		\item $E(P_1) \cup E(P_2) \cup \ldots \cup E(P_t)$ is a partition of $E$; 
		\item the first ear $P_1$ in the sequence is a cycle; and
		\item for every $i = 2, 3, \ldots, t$, $P_i$ is a path and exactly two vertices of $P_i$,
		which are the endpoints of $P_i$, belong to the earlier ears.
	\end{enumerate}
\end{definition}

The following theorem is a classical result due to H.~Whitney.

\begin{theorem}[Whitney, \cite{Whi92}]\label{th:ear2connected} 
	A graph is $2$-connected if and only if it has an open ear decomposition.
\end{theorem}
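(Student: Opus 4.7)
The plan is to prove the two implications separately, with essentially all the work going into the forward direction. The reverse direction (existence of an open ear decomposition implies $2$-connectedness) is a routine induction on the number $t$ of ears: the base case $t=1$ is a cycle of length at least $3$, which is $2$-connected. For the inductive step, suppose $G_{t-1} = P_1 \cup \cdots \cup P_{t-1}$ is $2$-connected, and attach an ear $P_t$ with endpoints $x, y \in V(G_{t-1})$ and internal vertices outside $V(G_{t-1})$. To see that $G_t = G_{t-1} \cup P_t$ is $2$-connected, remove any vertex $v$ and check connectedness: if $v$ is internal to $P_t$, then $G_{t-1}$ is intact and each remaining internal vertex of $P_t$ attaches to $G_{t-1}$ via $x$ or $y$; if $v \in V(G_{t-1})$, then $G_{t-1}-v$ is connected by hypothesis and, since $x \neq y$, at most one of the endpoints equals $v$, so $P_t$ (with $v$ removed if applicable) remains attached.

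For the forward direction, I would build the ear decomposition greedily. Set $P_1$ to be any cycle of $G$, which exists because $G$ has minimum degree at least $2$. Inductively, suppose $H = P_1 \cup \cdots \cup P_i \subsetneq G$ is already built, and I want to extend by another ear. If some edge $uv \in E(G) \setminus E(H)$ has both endpoints in $V(H)$, take $P_{i+1} = uv$, viewed as a single-edge path. Otherwise, pick an edge $xw \in E(G)$ with $x \in V(H)$ and $w \notin V(H)$, which exists by connectedness of $G$. Since $G$ is $2$-connected, $xw$ is not a bridge, so $G - xw$ still contains a path from $x$ to $w$; combined with $xw$, this produces a cycle $C$ in $G$ through $xw$. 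Traverse $C$ starting at $x$, going through $w$, and let $y$ be the first vertex of $V(H)$ encountered; such a $y$ exists because the traversal eventually returns to $x \in V(H)$. The portion of $C$ from $x$ to $y$ through $w$ is the candidate ear $P_{i+1}$.

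The main obstacle is verifying that $y \neq x$, so that the new ear is a genuine path with two distinct endpoints in $V(H)$, as required. I would argue this by contradiction: if $y = x$, then $C$ meets $V(H)$ only at $x$, so deleting $x$ from $G$ separates the internal vertices of $C$ (which include $w \notin V(H)$) from $V(H) \setminus \{x\}$; since $H$ is $2$-connected it has at least $3$ vertices, hence $V(H) \setminus \{x\} \neq \emptyset$, making $x$ a cut vertex of $G$ and contradicting $2$-connectedness. The new ear $P_{i+1}$ contains at least one edge not in $E(H)$, and since $G$ is finite this greedy construction terminates with $H = G$, yielding the desired open ear decomposition.
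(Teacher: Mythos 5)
The paper does not prove this statement at all --- it is quoted as a classical theorem of Whitney with a citation to \cite{Whi92} --- so there is no in-paper argument to compare against; your proposal has to stand on its own. The reverse direction is fine. The forward direction, however, has a genuine gap at exactly the step you flag as ``the main obstacle'': the claim that $y=x$ leads to a contradiction. From the hypothesis that the cycle $C$ through $xw$ meets $V(H)$ only at $x$, you conclude that deleting $x$ separates the internal vertices of $C$ from $V(H)\setminus\{x\}$. That inference is false: $G$ may contain edges outside $E(C)\cup E(H)$ joining $V(C)\setminus\{x\}$ to $V(H)\setminus\{x\}$. Concretely, let $H$ be the triangle $x,a,b$ and let $G$ additionally contain vertices $w,z$ with edges $xw, wz, zx, za$. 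Then $G$ is $2$-connected, the edge $xw$ lies on the cycle $(x,w,z,x)$, which meets $V(H)$ only at $x$, yet $x$ is not a cut vertex (the edge $za$ keeps $G-x$ connected). Your traversal of that cycle returns $y=x$ and produces no valid ear, and your contradiction argument does not fire, so the induction stalls. (A valid ear does exist here, namely $(x,w,z,a)$; the problem is that an arbitrary cycle through $xw$ need not find it.)

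The standard repair is to not route through a cycle containing $xw$ at all. Having chosen $x\in V(H)$, $w\notin V(H)$ with $xw\in E(G)$, use $2$-connectedness directly: $G-x$ is connected and $V(H)\setminus\{x\}\neq\emptyset$ (since $H$ contains a cycle, $|V(H)|\geq 3$), so there is a path $Q$ in $G-x$ from $w$ to $V(H)$. Truncate $Q$ at its first vertex $y\in V(H)$; then $y\neq x$ automatically because $Q$ avoids $x$, all internal vertices of the walk $x,w,\dots,y$ lie outside $V(H)$, and prepending the edge $xw$ yields a legitimate open ear. With that substitution the rest of your argument (case split, termination, and the reverse induction) goes through.
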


Our strategy is as follows. We will show that if the fractional domatic number
of a graph $G$ is more than 2, then after adding an open ear $P=(x,\ldots, y)$, to $G$ the fractional domatic number of the resulting graph $G'$ stays strictly above 2.
To prove this, we will show (\cref{lem:confExt}) that $G$ admits a $(x,y)$-nice $(2r+1,r)$-configuration. The result will then follow from \cref{lem:earExtension} as $P$ is a binary path in $G'$. 

Using this result and \cref{th:ear2connected} we will show (\cref{th:2con}), by induction on the number of ears in an ear decomposition, that the fractional domatic number is above 2 as long as it is above 2 for the first ear in the ear decomposition. Since, by \cref{lem:FDCycles}, the fractional domatic number of any cycle that is different from $C_4$ is more than 2 and any cycle can start an ear decomposition, the above would work for any 2-connected graph that has a cycle different from $C_4$. As we show in the next two lemmas, in the case when all cycles in a 2-connected graph are $C_4$s, the graph has very simple structure and its fractional domatic number is more than 2.

\begin{lemma}\label{lem:onlyC4}
	Let $G$ be a 2-connected graph with $n \geq 4$ vertices that contains no 
	(not necessarily induced) cycles of length other than 4. Then $G = K_{2,n-2}$.
\end{lemma}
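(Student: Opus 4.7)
The plan is to show in sequence that (1) $G$ is bipartite, (2) $G$ is in fact complete bipartite, and (3) one of the two parts has exactly two vertices. I will use Whitney's classical characterization of 2-connectivity: a graph with at least three vertices is 2-connected if and only if every two of its vertices lie on a common cycle.

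For step (1), since every cycle of $G$ has even length 4, $G$ has no odd cycle and is therefore bipartite; write its bipartition as $(A, B)$. Because $G$ is 2-connected with $n \geq 4$, a part of size 1 would make the other part consist of degree-1 vertices (a star), which is not 2-connected, so $|A|, |B| \geq 2$. For step (2), I claim that any two vertices $a_1, a_2 \in A$ satisfy $N(a_1) = N(a_2)$. Suppose not: there is some $b \in N(a_1) \setminus N(a_2)$. By Whitney's theorem $a_2$ and $b$ lie on a common cycle, which by hypothesis is a 4-cycle. In a bipartite 4-cycle $u_1 u_2 u_3 u_4 u_1$, vertices from opposite parts of the bipartition occupy consecutive positions and are therefore adjacent; applying this to $a_2 \in A$ and $b \in B$ forces $a_2 b \in E(G)$, a contradiction. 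So all vertices of $A$ share a common neighborhood $N \subseteq B$, and since $G$ is connected, $N = B$; symmetrically all vertices of $B$ share the common neighborhood $A$. Hence $G = K_{|A|, |B|}$.

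For step (3), if both $|A| \geq 3$ and $|B| \geq 3$, then $K_{|A|, |B|}$ contains the 6-cycle $a_1 b_1 a_2 b_2 a_3 b_3 a_1$, contradicting the hypothesis that all cycles have length 4. Combined with $|A|, |B| \geq 2$ from step (1), this forces $\min\{|A|, |B|\} = 2$, so $G = K_{2, n-2}$ as required. The only subtle point is the argument in step (2): it crucially exploits the fact that the bipartite 4-cycle through $a_2$ and $b$ guaranteed by 2-connectivity must place $a_2$ and $b$ at consecutive positions because they lie in opposite parts, so any ``non-neighbor in the same class'' instantly breaks.
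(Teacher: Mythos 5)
Your proof is correct, but it takes a genuinely different route from the paper. The paper proves this lemma by induction on the number of ears in a minimum open ear decomposition: starting from $P_1 = C_4 = K_{2,2}$, it shows that each subsequent ear must be a path with exactly one internal vertex whose endpoints lie in the size-two part, so the graph grows from $K_{2,p}$ to $K_{2,p+1}$. You instead avoid induction entirely: you observe that the absence of odd cycles makes $G$ bipartite, invoke the ``any two vertices lie on a common cycle'' characterization of $2$-connectivity to show that two vertices in the same part with distinct neighbourhoods would force a $4$-cycle through a vertex and a non-neighbour in the opposite part (impossible, since on a bipartite $4$-cycle vertices in opposite parts are consecutive, hence adjacent), conclude $G$ is complete bipartite, and then rule out $\min\{|A|,|B|\}\geq 3$ via the $6$-cycle in $K_{3,3}$. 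Each step checks out, including the subtle point you flag. Your argument is more direct and self-contained; the paper's version has the advantage of fitting naturally into the ear-decomposition machinery that Section 3.2 sets up anyway (and reuses in \cref{th:2con}), whereas yours relies on a different, though equally classical, form of Whitney's theorem than the one the paper quotes.
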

\begin{proof}
	We prove the statement by induction on the minimum number of ears in an open ear decomposition. 
	Let $P_1, P_2, \ldots, P_k$ be an open ear decomposition of $G$ with the minimum number of ears.
	If $k=1$, then by the assumption $G = C_4 = K_{2,2}$ and the statement holds. 
	Let now $k \geq 2$ and let $G'$ be the 2-connected graph with the ear decomposition 
	$P_1, P_2, \ldots, P_{k-1}$. Since $G'$ is obtained from $G$ by removing some edges and/or vertices
	(namely, the edges and the internal vertices of $P_k$), $G'$ has no cycles of length other than 4.
	Hence, by the induction hypothesis, $G' = K_{2,p}$ for some $p \geq 2$.  
	
	Now, if $P_k$ consists of a single edge $xy$, then $x$ and $y$ are non-adjacent in $G'$,
	and therefore they belong to the same part of the $K_{2,p}$. But then $G$ contains a cycle on 3 vertices.
	Hence we assume that $P_k$ has at least one internal vertex and denote 
	$P_k=(x, v_1, v_2, \ldots, v_t, y)$, where $t \geq 1$. 
	If the number $t$ of the internal vertices in $P_k$ is at least 2, then it is easy to see that 
	$G$ contains a cycle of length more than 4. Hence $t = 1$. If $x$ and $y$ belong to the different parts
	of $K_{2,p}$ or they are in the part with $p \geq 3$ vertices, then it is easy to check that $G$ 
	contains a cycle on at least 5 vertices. This contradiction shows that $x$ and $y$ must belong
	to the part of $K_{2,p}$ that contains 2 vertices, and hence $G=K_{2,p+1}$.
\end{proof}

\begin{lemma}\label{lem:FDcompleteBip}
	For every $p \geq 2$, $K_{2,p}$ admits a $(3p-2, p)$-configuration, and therefore $\fdn(K_{2,p}) \geq \frac{3p-2}{p}$.
\end{lemma}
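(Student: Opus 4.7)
The plan is to exhibit an explicit $(3p-2,p)$-configuration of $K_{2,p}$ and then invoke \cref{obs:lowerFDN}.

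Write the bipartition of $K_{2,p}$ as $\{a,b\} \cup \{v_1,v_2,\ldots,v_p\}$, where every vertex in the first part is adjacent to every vertex in the second part. I would first list the dominating sets I intend to use: for each $i \in [p]$, the two sets $\{a,v_i\}$ and $\{b,v_i\}$; together with $p-2$ copies of the set $\{v_1,v_2,\ldots,v_p\}$ (the last collection is empty when $p=2$). This gives $2p + (p-2) = 3p-2$ sets in total.

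Next I would verify that each listed set is a dominating set. Any set of the form $\{a,v_i\}$ dominates $b$ and all $v_j$ through $a$, and the set of all $v_j$'s dominates both $a$ and $b$ (and of course itself). Then I would tally the multiplicities vertex by vertex: vertex $a$ appears exactly in the $p$ sets of the form $\{a,v_i\}$, and symmetrically $b$ appears in exactly $p$ sets; each $v_i$ appears in $\{a,v_i\}$, $\{b,v_i\}$, and in all $p-2$ copies of $\{v_1,\ldots,v_p\}$, giving $2+(p-2)=p$ occurrences. Every vertex is covered at most $p$ times, as required.

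Hence the multiset is a $(3p-2,p)$-configuration of $K_{2,p}$, and \cref{obs:lowerFDN} yields $\fdn(K_{2,p}) \geq \frac{3p-2}{p}$. There is no real obstacle here: the only point that merits explicit mention is the boundary case $p=2$, where the third family is empty and the $2p=4$ sets $\{a,v_1\},\{a,v_2\},\{b,v_1\},\{b,v_2\}$ already witness a $(4,2)$-configuration, in agreement with $3p-2=4$.
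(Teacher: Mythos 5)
Your construction is exactly the one used in the paper: the $2p$ sets pairing each vertex of the large part with one of $a,b$, plus $p-2$ copies of the large part itself, and your vertex-by-vertex count of multiplicities is correct. The proposal is correct and takes essentially the same approach as the paper's proof, with the $p=2$ boundary case spelled out a little more explicitly.
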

\begin{proof}
	Let $a_1, a_2$ and $b_1, b_2, \ldots, b_p$ be the vertices of the $K_{2,p}$ such that both $a_1$ and $a_2$ are
	adjacent to every vertex $b_i$, $i \in [p]$. 
	It is straightforward to verify that the following family of dominating sets is a $(3p-2, p)$-configuration:
	\begin{enumerate}
		\item the $p$ sets $\{ a_1, b_i \}$, $i \in [p]$;
		\item the $p$ sets $\{ a_2, b_i \}$, $i \in [p]$; and
		\item $p-2$ copies of $\{ b_1, b_2, \ldots, b_p \}$.
	\end{enumerate}
\end{proof}

\begin{observation}\label{obs:addDominatingSet}
	Let $G=(V,E)$ be a graph, let $S$ be a dominating set in $G$, and
	let $\D$ be a $(2k+1,k)$-configuration of $G$.
	Then the multiset $\D'$ that consists of the set $S$ and
	two copies of every element in $\D$ is a $(2r+1,r)$-configuration of $G$,
	where $r=2k+1$.
\end{observation}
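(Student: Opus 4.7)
The plan is a direct verification, checking the three required properties of a $(2r+1,r)$-configuration for $\D'$: the cardinality count, the dominating-set property of every member, and the covering bound at every vertex.

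First I would count $|\D'|$. Since $\D'$ consists of the single set $S$ together with two copies of each of the $2k+1$ sets in $\D$, we get $|\D'| = 1 + 2(2k+1) = 4k+3$. With $r = 2k+1$ this is exactly $2r+1$, as required.

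Next, every member of $\D'$ is a dominating set of $G$: the set $S$ is a dominating set by hypothesis, and each copy of a set from $\D$ is a dominating set because $\D$ is a $(2k+1,k)$-configuration.

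The main (and only nontrivial) point is the per-vertex covering bound. Fix any $v \in V$. Let $c_{\D}(v)$ denote the number of sets in $\D$ containing $v$; by assumption $c_{\D}(v) \leq k$. The number of sets of $\D'$ containing $v$ equals $2 c_{\D}(v) + \one{v \in S}$, which is at most $2k + 1 = r$. Hence every vertex is covered by at most $r$ sets of $\D'$, so $\D'$ is a $(2r+1,r)$-configuration of $G$, which completes the proof. There is no real obstacle here; the statement is essentially the combination of $\D \cup \D$ (a $(4k+2, 2k)$-configuration by \cref{obs:confAdditivity}) with the trivial $(1,1)$-configuration $\{S\}$, and the whole content is the arithmetic check $2k+2k+1 = 2(2k+1)+1$ and $k+k+1 = 2k+1$.
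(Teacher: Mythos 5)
Your proof is correct and follows essentially the same route as the paper's own argument: count $|\D'| = 2|\D|+1 = 2r+1$, note every set is dominating, and observe that each vertex lies in at most $2k$ sets coming from the two copies of $\D$ plus possibly $S$, hence at most $2k+1 = r$ sets in total. Your version is just slightly more explicit about the per-vertex count; there is no substantive difference.
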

\begin{proof}
	By construction, $|\D'| = 2|\D|+1 = 2r+1$. Moreover, every vertex in $G$ belongs to
	at most $2k$ sets in $\D' \setminus \{ S \}$ and therefore every vertex belongs to at most 
	$r=2k+1$ sets of $\D'$.
\end{proof}

\begin{lemma}\label{lem:confExt}
	Let $G=(V,E)$ be a graph and let $x,y$ be two distinct vertices in $G$.
	If $G$ has a $(2k+1,k)$-configuration $\D$,
	then, for some $r \leq 2k+1$, $G$ has a $(x,y)$-nice $(2r+1,r)$-configuration $\D'$.
\end{lemma}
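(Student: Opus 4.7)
The plan is to split into cases based on how $x$ and $y$ sit inside $\D$, using \cref{obs:addDominatingSet} to inflate $\D$ only when necessary. First, I would pad $\D$ so that each of $x$ and $y$ lies in exactly $k$ sets of $\D$: if $x$ appears in $a' < k$ sets, replace any $k - a'$ sets $D$ not containing $x$ by $D \cup \{x\}$, which remain dominating and keep $\D$ a $(2k+1, k)$-configuration; do the analogous operation for $y$. Partition $\D = \D_x \cup \D_y \cup \D_{xy} \cup \D_{\overline{xy}}$. The identities $|\D_x| + |\D_{xy}| = |\D_y| + |\D_{xy}| = k$ force $|\D_x| = |\D_y|$; denote this common value by $a$, so that $|\D_{xy}| = k - a$ and $|\D_{\overline{xy}}| = k+1-a$.

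Next, I would handle three cases by the value of $a$. If $1 \leq a \leq k-1$, all of $\D_x, \D_y, \D_{xy}$ are nonempty and $\D$ itself is $(x,y)$-nice with $r = k$. If $a = k$, so that $|\D_{xy}| = 0$, a single application of \cref{obs:addDominatingSet} with $S = V(G)$ yields a $(4k+3, 2k+1)$-configuration $\D'$ in which $|\D'_x| = |\D'_y| = 2k$ and $|\D'_{xy}| = 1$, with $x$ and $y$ each covered exactly $r = 2k+1$ times; hence $\D'$ is $(x,y)$-nice.

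The main obstacle is the remaining case $a = 0$ (every $x$-containing set also contains $y$ and vice versa), where adjoining $V(G)$ alone would leave $|\D'_x| = 0$. I would instead take two copies of $\D$, in one copy replace some $D \in \D_{\overline{xy}}$ (which exists since $|\D_{\overline{xy}}| = k+1 \geq 2$) by $D \cup \{y\}$, and adjoin $V \setminus \{y\}$, which is dominating because $y$ has a neighbour (as $G$ has no isolated vertex). A direct count gives $|\D'_x| = 1$ (from $V \setminus \{y\}$), $|\D'_y| = 1$ (from $D \cup \{y\}$), $|\D'_{xy}| = 2k$, totalling $4k+3$ sets; $x$ is covered $2k$ times by the two copies of $\D_{xy}$ and once by $V \setminus \{y\}$, $y$ is covered $2k$ times by the copies and once by the modification, and every other vertex is covered at most $2k$ times by the copies plus at most once by $V \setminus \{y\}$. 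This makes $\D'$ an $(x,y)$-nice $(2r+1, r)$-configuration with $r = 2k+1$, completing the proof.
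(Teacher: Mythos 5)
Your proof is correct and follows essentially the same route as the paper's: split on whether $|\D_{xy}|$ is zero, intermediate, or maximal, pad $x$ and $y$ into extra sets so that each is covered exactly $k$ times, and in the two degenerate cases duplicate the configuration and adjoin one extra dominating set as in \cref{obs:addDominatingSet}. The only cosmetic difference is in the case $|\D_{xy}| = k$, where the paper adjoins $V \setminus \{x,y\}$ (justified via $\delta(G) \geq 2$) and reduces back to the intermediate case, whereas you adjoin $V \setminus \{y\}$ and move $y$ into one duplicated set, then verify niceness directly; both work.
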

\begin{proof}
	First, let us assume that $1 \leq |\D_{xy}| < k$.
	In this case, the desired $(2r+1,r)$-configuration $\D'$, where $r=k$, 
	is obtained from $\D$ by extending some of the sets in $\D$ as follows:
	\begin{enumerate}
		\item add $x$ to $k - |\D_x| - |\D_{xy}|$ sets of $\D_{\overline{xy}}$;
		\item add $y$ to some other $k - |\D_y| - |\D_{xy}|$ sets of $\D_{\overline{xy}}$.
	\end{enumerate}
	Notice that such a modification can always be done, because 
	$$
	(k - |\D_x| - |\D_{xy}|) + (k - |\D_y| - |\D_{xy}|) \leq
	(k - |\D_x| - |\D_{xy}|) + (k - |\D_y|) + 1 = |\D_{\overline{xy}}|.
	$$
	Clearly, every new set remains dominating in $G$, and each of $x$ and $y$
	belongs to exactly $k$ sets of $\D'$. Moreover, since $|\D_{xy}| < k$,
	we have that both $\D_x'$ and $\D_y'$ are nonempty. Hence, $\D'$ is $(x,y)$-nice.

	Suppose now $|\D_{xy}| = 0$. Then we define $\D'$ to be 
	the multiset obtained from $\D$ by duplicating every 
	element in $\D$ and adding the dominating set $V$.
	By \cref{obs:addDominatingSet}, the multiset $\D'$ is a 
	$(2r+1,r)$-configuration of $G$, where $r =  2k+1$. 
	Moreover, $|\D_{xy}'| = 1 < r$,
	and therefore, by the above argument, $\D'$ can be turned into
	the desired $(x,y)$-nice $(2r+1,r)$-configuration.
	
	Similarly, if $|\D_{xy}| = k$, we define $\D'$ to be the multiset 
	obtained from $\D$ by duplicating every element in $\D$ 
	and adding the set $V \setminus \{ x,y \}$, which is dominating
	because every vertex in $G$ has degree at least two as $\fdn(G) \geq \frac{2k+1}{k} > 2$.
	As before, $\D'$ is a $(2r+1,r)$-configuration with $r =  2k+1$. Moreover,
	$1 \leq |\D_{xy}'| = 2k < r$, and hence, by the above argument the desired
	$(x,y)$-nice $(2r+1,r)$-configuration can be obtained from $\D'$.
\end{proof}

\begin{theorem}
	\label{th:2con}
	Let $G$ be a 2-connected graph, which is not a $C_4$. Then $\fdn(G) > 2$.
\end{theorem}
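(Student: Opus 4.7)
The plan is to split the argument into two cases according to whether $G$ contains a cycle of length different from $4$, handling one case via the structural Lemma on cycles-only graphs and the other by induction along an ear decomposition.

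\textbf{Case 1: every cycle of $G$ has length $4$.} Since $G$ is $2$-connected with at least $3$ vertices, and since $C_3$ has no length-$4$ cycle, $G$ has at least $4$ vertices. Then \cref{lem:onlyC4} forces $G = K_{2, n-2}$. Because $G \neq C_4 = K_{2,2}$, we have $n - 2 \geq 3$. Now \cref{lem:FDcompleteBip} gives
\[
\fdn(G) \;\geq\; \frac{3(n-2) - 2}{n-2} \;=\; 3 - \frac{2}{n-2} \;>\; 2,
\]
since $n - 2 \geq 3$. This disposes of the case.

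\textbf{Case 2: $G$ has a cycle $C$ of length different from $4$.} I use the fact that in a $2$-connected graph any cycle can serve as the first ear of an open ear decomposition (a standard strengthening of \cref{th:ear2connected}). Fix such a decomposition $P_1 = C, P_2, \ldots, P_t$ and let $G_i$ denote the subgraph obtained from $P_1 \cup \cdots \cup P_i$. I prove $\fdn(G_i) > 2$ by induction on $i$. The base case $G_1 = C$ follows from \cref{lem:FDCycles}, since $C$ is a cycle of length $\neq 4$.

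For the inductive step, assume $\fdn(G_{i-1}) > 2$ and consider adding $P_i$, which is a path with endpoints $x, y \in V(G_{i-1})$. If $P_i$ is a single edge, then $G_i$ is obtained from $G_{i-1}$ by adding an edge, so every $(k,s)$-configuration of $G_{i-1}$ remains one of $G_i$, and we are done. Otherwise $P_i = (x, v_1, \ldots, v_s, y)$ with $s \geq 1$ is a binary path in $G_i$, since its internal vertices have degree exactly $2$ in $G_i$. By \cref{obs:2FDN}, $G_{i-1}$ admits a $(2k+1, k)$-configuration for every sufficiently large $k$; pick one. Applying \cref{lem:confExt} to $x$ and $y$ produces an $(x,y)$-nice $(2r+1, r)$-configuration of $G_{i-1}$ for some $r$. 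Finally, \cref{lem:earExtension} extends this to a $(2r+1, r)$-configuration of $G_i = G_{i-1} \cup P_i$, so $\fdn(G_i) \geq (2r+1)/r > 2$.

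The main conceptual obstacle is essentially organisational: the hard lifting is already done by \cref{lem:earExtension} and \cref{lem:confExt}. The only genuinely delicate point is invoking an ear decomposition whose first ear is a cycle of length different from $4$, which requires the strengthened form of Whitney's theorem allowing any prescribed cycle as the initial ear. A minor but easy-to-overlook issue is that single-edge ears need to be handled separately, since \cref{lem:earExtension} is stated for binary paths with at least two vertices; fortunately an edge-addition trivially preserves every configuration.
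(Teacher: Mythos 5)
Your proof is correct and follows essentially the same route as the paper's: the $K_{2,p}$ case via \cref{lem:onlyC4} and \cref{lem:FDcompleteBip}, and otherwise an induction along an open ear decomposition starting from a cycle of length $\neq 4$, combining \cref{obs:2FDN}, \cref{lem:confExt} and \cref{lem:earExtension}. The only (cosmetic) difference is that you treat single-edge ears as a separate trivial case, whereas the paper absorbs them into \cref{lem:earExtension}, whose proof notes that the conclusion holds trivially when the path has no internal vertices.
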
 
\begin{proof}
	If $G$ is equal to $K_{2,p}$ for some $p \geq 3$, then $\fdn(G) \geq \frac{7}{3} > 2$  by \cref{lem:FDcompleteBip}.
	Hence, by \cref{lem:onlyC4}, we assume that $G$ contains a cycle $C$ of length 3 or more than 4. 
	It is known that for any cycle in a 2-connected graph there is an open ear decomposition that starts
	with this cycle (see e.g. \cite[Theorem 4.2.8]{West01}). 
	Let $P_1, P_2, \ldots, P_t$, $t \geq 1$ be an open ear decomposition of $G$ with $P_1 = C$.
	We will show by induction on $i \in [t]$ that the graph $G_i$ with the open ear decomposition 
	$P_1, P_2, \ldots, P_i$ has fractional domatic number greater than 2. The theorem will then follow for $i = t$.
	
	In the base case $i=1$, the claim follows from \cref{lem:FDCycles}. 
	Assume therefore that $i \geq 2$ and the statement holds for $G_{i-1}$. 
	Let $P_i = (x, v_1, v_2, \ldots, v_q, y)$ and note that $P_i$ is a binary path in $G_i$.
	Since $\fdn(G_{i-1}) > 2$, by \cref{obs:2FDN} there exists a $(2k+1,k)$-configuration of $G_{i-1}$, for some $k \geq 1$, and hence, by \cref{lem:confExt}, $G_{i-1}$ admits a $(x,y)$-nice $(2r+1,r)$-configuration for some $r \leq 2k+1$.
	Therefore, by \cref{lem:earExtension}, $G_i$ has a $(2r+1,r)$-configuration implying $\fdn(G) \geq \frac{2r+1}{r} > 2$.
\end{proof}

\subsection{Putting everything together: proof of \cref{th:main}}
\label{sec:proof-main}

In this section we combine the facts presented in the paper to prove our main result

\mainThm*
\begin{proof}
	We will prove the statement by induction on the number of vertices in $G$. 
	Clearly, the statement holds for graphs with at most two vertices, as in this case the minimum degree is less than 2. 
	Suppose that $|V(G)| > 2$ and 
	the statement holds for all graphs with less than $|V(G)|$ vertices.
	
	Since $\delta(G) \geq 2$, by \cref{lem:deg2decomposition}, $G$ is either a 2-connected graph
	or a $(H_1,H_2)$-dumbbell for some connected graphs $H_1$ and $H_2$ with $\delta(H_1) \geq 2$
	and $\delta(H_2) \geq 2$. In the former case, the result holds by \cref{th:2con}. Hence assume that $G$ is
	a $(H_1,H_2)$-dumbbell.
	
	If both $H_1$ and $H_2$ are $C_4$s, then the result follows from \cref{lem:C4C4dumbbell}.
	If exactly one of the graphs, say $H_1$, is $C_4$, and  graph $H_2$ is different from $C_4$ and $\delta(H_2) \geq 2$, then $\fdn(H_2) > 2$ by the induction hypothesis,
	and hence, by \cref{obs:2FDN}, graph $H_2$ admits a $(2k+1,k)$-configuration for some $k \geq 3$. Then the statement follows from \cref{lem:C4Hdumbbell}. Finally, if both $H_1$ and $H_2$ are different from $C_4$, then by the induction hypothesis, both of them have fractional domatic number more than 2 and therefore they admit $(2r+1,r)$- and $(2k+1,k)$-configurations respectively for some positive $r$ and $k$. The statement then follows from \cref{lem:H1H2dumbbell}.
\end{proof}

\section{Conclusion}
\label{sec:conclusion}

In this paper we characterized graphs with fractional domatic number 2.
In order to do this, we showed that any connected graph of minimum degree at least two that is
different from $C_4$ has fractional domatic number more than 2.
While our proof does not bound the fractional domatic number away from 2, we did not find
any graph with this parameter being strictly between 2 and 7/3. 
In fact we believe that there are no such graphs

\begin{conjecture}
	If the fractional domatic number of a graph is greater than 2, then it is at least 7/3.
\end{conjecture}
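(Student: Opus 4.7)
The plan is to follow the overall architecture of the proof of \cref{th:main} but to upgrade every $(2r+1,r)$-configuration produced along the way to a $(7,3)$-configuration. Within the $(2r+1,r)$ framework the target ratio $7/3$ forces $r = 3$ throughout, since $(2r+1)/r \geq 7/3$ iff $r \leq 3$, and $r = 3$ is the only choice strictly above $2$ but below $3$. By \cref{lem:FDDisjUnion} we restrict to connected $G$, and the hypothesis $\fdn(G) > 2$ forces $\delta(G) \geq 2$ and $G \neq C_4$; \cref{lem:deg2decomposition} then splits the analysis into the 2-connected case and the dumbbell case, which we handle by induction on $|V(G)|$ aiming to produce a $(7,3)$-configuration.

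For the dumbbell case, \cref{lem:C4C4dumbbell} already delivers the required $(7,3)$-configuration. \cref{lem:C4Hdumbbell} is stated for $k \geq 3$ and so, at $k=3$, propagates a $(7,3)$-configuration of the non-$C_4$ plate (supplied by the induction hypothesis) to one of the dumbbell. \cref{lem:H1H2dumbbell} preserves $(2r+1,r)$-configurations when $r = k$, and at $r = k = 3$ it produces a $(7,3)$-configuration of the dumbbell from $(7,3)$-configurations of both plates.

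For the 2-connected case, \cref{lem:onlyC4} separates $G = K_{2,p}$ with $p \geq 3$ (handled directly by \cref{lem:FDcompleteBip}, giving $\fdn(G) \geq (3p-2)/p \geq 7/3$) from graphs $G$ containing a cycle $C$ of length $\neq 4$. In the latter case Whitney's theorem (\cref{th:ear2connected}) provides an open ear decomposition with $P_1 = C$, and $\fdn(C) \geq 7/3$ by \cref{lem:FDCycles} (since all cycles of length $3$, $\geq 5$ have ratio $\geq 7/3$). One then inducts on the number of ears, aiming to maintain a $(7,3)$-configuration through each ear addition.

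The main obstacle is the ear-addition step. \cref{lem:earExtension} already preserves a $(7,3)$-configuration across a binary ear once the input is $(x,y)$-nice, but \cref{lem:confExt} in its current form upgrades a $(2k+1,k)$-configuration to an $(x,y)$-nice one at the cost of inflating $r$ up to $2k+1$, which for $k=3$ gives $r=7$ and degrades the ratio to $15/7 < 7/3$. Hence what must be proved is a $(7,3)$-preserving strengthening: every 2-connected graph $H$ containing a non-$C_4$ cycle admits, for every pair $x,y \in V(H)$, an $(x,y)$-nice $(7,3)$-configuration. A short counting argument shows that $K_{2,3}$ with $x,y$ on its 2-side admits no such configuration (every set in $\D_{\overline{xy}}$ is forced to contain all three degree-$2$ vertices, and then any set in $\D_x \cup \D_y$ must contain one of them, forcing some degree-$2$ vertex into at least four sets), but this obstruction is excluded by the hypothesis since $K_{2,p}$ has no cycle of length other than $4$. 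I expect the strengthened lemma to proceed either structurally, exploiting a non-$C_4$ cycle through $x$ and $y$ to build niceness into the configuration, or via LP duality, showing that the dual of the domination LP of any such $H$ cannot fall below $7/3$; the structural route seems more tractable but will likely require detailed case analysis on 2-connected graphs built by adding few ears to $C_3$, $C_5$, and $C_7$. This step is the principal difficulty standing between the present methods and the conjecture.
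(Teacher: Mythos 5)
This statement is a \emph{conjecture}: the paper does not prove it, and its concluding section explicitly names the missing ingredient, namely a version of \cref{lem:confExt} that produces an $(x,y)$-nice configuration with $r=k$ rather than $r\leq 2k+1$. Your proposal is not a proof either --- it is a plan whose central step you yourself flag as unresolved. That step, the claim that every $2$-connected graph containing a cycle of length other than $4$ admits an $(x,y)$-nice $(7,3)$-configuration for \emph{every} pair $x,y$, is exactly where the difficulty lives; you assert it and sketch two possible attacks but prove neither, so the argument has a genuine gap at its core. To your credit, the obstacle you isolate coincides with the one the authors identify, your accounting of which existing lemmas survive at $r=k=3$ (\cref{lem:C4C4dumbbell}, \cref{lem:C4Hdumbbell} at $k=3$, \cref{lem:H1H2dumbbell} at $r=k$, \cref{lem:earExtension}, \cref{lem:FDcompleteBip}) is accurate, and your counting argument that $K_{2,3}$ admits no $(a_1,a_2)$-nice $(7,3)$-configuration for $x,y$ on the $2$-side is correct: each set avoiding both of $x,y$ must contain all three degree-$2$ vertices, each set in $\D_x\cup\D_y$ must contain at least one of them, and summing incidences gives at least $9+|\D_{xy}|\geq 10$ over three vertices, forcing one of them into four sets. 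This shows your strengthened lemma genuinely needs the non-$C_4$-cycle hypothesis, but it does not establish the lemma.

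Two further cautions. First, insisting on $(2r+1,r)$-configurations with $r\leq 3$ is strictly stronger than the conjecture demands: $\fdn(G)\geq 7/3$ only requires \emph{some} $(k,s)$-configuration with $k/s\geq 7/3$, and for graphs where the optimum is exactly $7/3$ (e.g.\ $C_7$, where \cref{lem:FDCycles} gives $21/9$) the natural witness need not have the form $(2r+1,r)$; your framework forces you to find a $(7,3)$-witness even when the LP optimum is attained only with larger denominators, which may fail even when the conjecture holds. Second, the ear-decomposition induction requires every intermediate graph $G_i$ to satisfy the hypothesis of your strengthened lemma; this does hold once $P_1$ is chosen to be a non-$C_4$ cycle, but you should say so explicitly. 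As it stands, your write-up is a correct reduction of the conjecture to an open lemma, not a proof of the conjecture.
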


\noindent
In our approach the only obstacle to proving this conjecture is \cref{lem:confExt} which 
guarantees that given a $(2k+1,k)$-configuration one can always find a $(x,y)$-nice $(2r+1,r)$-configuration
for some $r \leq 2k+1$, but does not guarantee that one can always find such a configuration with $r=k$, which would be enough to settle the conjecture.

\paragraph{Acknowledgments.} We thank Chun-Hung Liu and Sarosh Adenwalla for their feedback on an earlier draft of the paper.
We are grateful to the anonymous referees for their thorough reading of the paper and the detailed comments, which improved the presentation of the paper.

\bibliographystyle{plainurl}
\bibliography{bibliography}

\begin{thebibliography}{10}

\bibitem{WMCRP16}
Waseem Abbas, Magnus Egerstedt, Chun-Hung Liu, Robin Thomas, and Peter Whalen.
\newblock Deploying robots with two sensors in ${K}_{1,6}$-free graphs.
\newblock {\em Journal of Graph Theory}, 82(3):236--252, 2016.

\bibitem{CH75}
Ernest Cockayne and Stephen Hedetniemi.
\newblock Optimal domination in graphs.
\newblock {\em IEEE Transactions on circuits and systems}, 22(11):855--857,
  1975.

\bibitem{FYK00}
Satoshi Fujita, Masafumi Yamashita, and Tiko Kameda.
\newblock A study on $r$-configurations---a resource assignment problem on
  graphs.
\newblock {\em SIAM Journal on Discrete Mathematics}, 13(2):227--254, 2000.

\bibitem{GH21}
Wayne Goddard and Michael~A Henning.
\newblock Fractional domatic, idomatic, and total domatic numbers of a graph.
\newblock In {\em Structures of Domination in Graphs}, pages 79--99. Springer,
  2021.

\bibitem{MW05}
Thomas Moscibroda and Roger Wattenhofer.
\newblock Maximizing the lifetime of dominating sets.
\newblock In {\em 19th IEEE International Parallel and Distributed Processing
  Symposium}, pages 8--pp. IEEE, 2005.

\bibitem{Ore62}
Oystein Ore.
\newblock Theory of graphs.
\newblock Colloquium Publications. American Mathematical Society, 1962.

\bibitem{SU11}
Edward~R Scheinerman and Daniel~H Ullman.
\newblock {\em Fractional graph theory: a rational approach to the theory of
  graphs}.
\newblock Courier Corporation, 2011.

\bibitem{Suo06}
Jukka Suomela.
\newblock Locality helps sleep scheduling.
\newblock In {\em Working Notes of the Workshop on World-Sensor-Web: Mobile
  Device-Centric Sensory Networks and Applications}, pages 41--44. Citeseer,
  2006.

\bibitem{West01}
Douglas~B West.
\newblock {\em Introduction to graph theory}, volume~2.
\newblock Prentice hall Upper Saddle River, 2001.

\bibitem{Whi92}
Hassler Whitney.
\newblock Non-separable and planar graphs.
\newblock In {\em Hassler Whitney Collected Papers}, pages 37--59. Springer,
  1992.

\end{thebibliography}

\end{document}